\documentclass{article}
\usepackage{amsmath,amssymb,latexsym,amsthm}
\usepackage{mathrsfs}
\usepackage{mathtools}
\usepackage[english]{babel}
\usepackage[latin2]{inputenc}
\usepackage[margin=4cm]{geometry} 

\usepackage{graphicx} 
\usepackage{amssymb}
\usepackage{amsmath}
\usepackage{amsthm}
\usepackage{cancel}
\usepackage{xcolor}
\usepackage{pdflscape}
\numberwithin{equation}{subsubsection}

\theoremstyle{definition}
\newtheorem{theor}{Theorem}
\newtheorem{lem}[theor]{Lemma}
\newtheorem{co}[theor]{Corollary}
\newtheorem{prop}[theor]{Proposition}

\newtheorem{remark}[theor]{Remark}

\title{Three-dimensional simple real Bol algebras}
\author{G. Falcone, \'A. Figula, E. K\'asa, G. Mattana, P. T. Nagy}

\begin{document}

\maketitle

\begin{abstract}
In this paper, we provide a complete classification of real three-dimensional simple Bol algebras $(B, [.,.], \langle .,.,. \rangle )$. The main part of the classification concerns the case $[B,B]=B$. In addition, we identify two isomorphism classes with a nonzero binary product, both satisfying $\dim[B,B]=2$, as well as four simple classes of Lie triple systems in which the binary product vanishes.
\end{abstract}

\small{MSC 2020: 17A30 (primary); 17B20, 17D99, 15A21 (secondary)}

\small{Keywords: Bol algebra, simple algebra, congruences of matrices}

\section{Introduction}
\subsection{Background}
Non-associative algebraic structures arise naturally in differential geometry and mathematical physics whenever the local composition of transformations fails to be associative. Among these, \emph{Bol algebras} provide the infinitesimal counterpart of smooth Bol loops (named so after G. Bol \cite{Bol}), extending the classical correspondence between Lie groups and Lie algebras to a non-associative setting. This framework, originating in the work of Mal'tsev \cite{Malcev} and developed systematically by Akivis \cite{Akivis, Akivis1, Akivis2} and Sabinin \cite{sabinin,Sabinin1}, reveals a deep connection between algebraic identities and the local differential geometry of quasigroups.

At the global level, the theory of simple Bol loops has also attracted considerable attention; in particular, G. Nagy constructed simple Bol loops, including finite non-Moufang examples \cite{GNagy, GNagyTAMS}. 
Interest in Bol algebras has continued during the years, as witnessed by \cite{Filippov, Hentzel, Mikheev, Izquierdo, Zhang}.

From a geometric viewpoint, Bol algebras encode the infinitesimal structure of local homogeneous spaces endowed with a non-associative multiplication \cite{Nagy}. 
In particular, Lie triple systems, which can be regarded as Bol algebras with vanishing binary product \([x,y]=0\), are related to symmetric spaces \cite{Cartan, Jacobson, Loos, Nagy}. 
The Bol identities reflect the differential geometry of the corresponding local loops. In the geodesic-loop approach, the binary and trilinear infinitesimal operations are expressed in terms of the torsion and curvature of an affine connection and their covariant derivatives \cite[Corollary 5.14]{Nagy}, \cite{sabinin}. In this setting, the binary and trilinear operations of a Bol algebra encode the geometric properties of the underlying connection 
and provide a natural algebraic framework for describing affine connections with torsion \cite{Akivis0}, \cite[Chapter 4, $\S5$]{Gorbatsevich}, \cite{sabinin}, which play an important role in geometric mechanics.


\subsection{Overview}

The $2$-dimensional real Bol algebras are determined in \cite{kuzmin}, the nilpotent Bol algebras up to dimension $4$ are classified in \cite{abdelwahab,abdelwahab0,Abdurasulov}, and the determination of the $3$-dimensional solvable and splitting real Lie triple systems is carried out in \cite{Bouetou1}. 
The purpose of this paper is to provide a complete classification of
real three-dimensional simple Bol algebras.
This was made possible by a particularly favorable circumstance: despite the apparent complexity of the Bol identities (see Section \ref{sec:2}, (B1), (B2), (C)), the Hodge identification of $\Lambda^2B$ with $B$, which is available only in dimension three, makes a complete classification unexpectedly tractable.

The case where 
$[B,B]=B$
constitutes the main part of the classification. It does not, however,
exhaust all simple $3$-dimensional Bol algebras (see Theorems \ref{thm:rank-two} and \ref{thm:simpleLTS}).  There are exactly two further isomorphism
classes with nonzero binary product, both satisfying $
\dim[B,B]=2$,
whereas, notably, no simple Bol algebra occurs when
$\dim[B,B]=1$: 
one-dimensional binary product is incompatible with simplicity in dimension three. 
Finally, when
$[B,B]=0$, 
the binary product vanishes, and one is precisely in the class of
Lie triple systems; this yields four further simple cases  (see also \cite{Lister}).


\medskip

Our methods can be summarized as follows. Fixing an oriented Euclidean
structure on the underlying vector space $B$, in the main case $[B,B]=B$, the bilinear product can be written as
\[
[x,y]=A(x\times y), 
\]
where $A \in \operatorname{GL}(3,\mathbb R)$. Under a change of basis $g\in\operatorname{GL}(3,\mathbb R)$, the
matrix $A$ transforms according to the twisted congruence action
\[
A\longmapsto(\det g)^{-1}gAg^T
\]
(see \eqref{eq:action}). For nonsingular $A$, this equivalence is shown to coincide with ordinary
real congruence up to multiplication by $-1$ (see Lemma \ref{lem:twisted-congruence}). The canonical forms of
Lee--Weinberg therefore reduce the classification of the binary product
to a finite collection of normal forms, some of which depend on a real
parameter (see Theorem \ref{thm:binary-normal-forms}).

The trilinear product is encoded by a linear map
\[
P:B\longrightarrow\operatorname{End}(B),
\qquad
\langle x,y,z \rangle =P(x\times y)z,
\]
and hence by the three matrices $
P_i=P(e_i)$, $i=1,2,3$, where $\{e_1, e_2, e_3\}$ is a basis.
The Bol identities can consequently be translated into explicit
algebraic conditions on the quadruple
$(A;P_1,P_2,P_3)$.

A central feature of the classification is that the first
Bol identity (B1), together with cyclicity (C), already provides the decisive
restrictions. For each canonical matrix $A$, these identities form a
linear system in the entries of $P_1,P_2,P_3$. In the two cases where $A$ is symmetric, 
this system is rigid: the trilinear product is uniquely determined and
is necessarily of Lie type (see Proposition \ref{prop:symmetric-cases}),
\[
\langle x,y,z \rangle =[[x,y],z].
\]
In the nonsymmetric cases the same analysis either produces explicit
families of compatible trilinear products or rules out the corresponding
binary normal form altogether. In particular, the cases
{\rm (II.3$^\pm$)}, {\rm (II.4)}, and {\rm (III.1)} are already
excluded by the first Bol identity and cyclicity. Although these
constraints can be expressed as a $27\times27$ linear system, we avoid
a brute-force elimination and instead derive the solutions directly
from the operator identities. The resulting proofs are conceptual and
readable, 
while the corresponding linear systems have been independently checked both by hand and by symbolic computation.

A useful simplification occurs in this analysis: for every
family surviving the first Bol identity and cyclicity, the second Bol
identity (B2) is satisfied identically and introduces no further
restriction.   This phenomenon no longer persists
when $\dim[B,B] <3$, where the second Bol identity plays an
essential role. 

The preceding analysis also allows us to clarify the relation between
simplicity and the rank of the binary product. Since
\[
[B,B]=\operatorname{im}A,
\]
the condition $\operatorname{rank}A=3$ is equivalent to $[B,B]=B$.
In dimension three we prove that
$
[B,B]=B$ implies that $B$ is simple (see Proposition \ref{simpleBBB}).
The converse has some exceptions: simple Bol algebras also occur when
$\operatorname{rank}A=2$ and when $\operatorname{rank}A=0$.
The rank-two analysis yields exactly two further isomorphism classes,
while rank one yields no simple algebra.
Finally, if $A=0$, the binary product vanishes and the Bol identities
reduce to those of a Lie triple system. Thus the rank-zero case is
precisely the problem of classifying simple three-dimensional real Lie
triple systems. Using Lister's structure theory, these are represented
by
\[
\langle x,y,z\rangle
=h(y,z)x-h(x,z)y,
\]
where, up to congruence, 
$h=\pm I_3$, or 
$h=\pm \operatorname{diag}(1,1,-1)$.

\medskip

The explicit parametrization of all solutions also
opens the way to further investigations, including the study of
integrability to global Bol loops (cf. e.g. \cite{figula1}), the analysis of invariant
connections, and the relation with symmetric and pseudo-Riemannian
geometries.

\subsection{Main result}

Because of the technical nature and unavoidable length of the following
analysis, for ease of reference we collect here the main conclusions.

\bigskip\noindent
\textbf{Classification of simple three-dimensional real Bol algebras.}

Let $B$ be a simple three-dimensional real Bol algebra with binary
product $[x,y]$ and ternary product $\langle x,y,z\rangle$.  Then, up
to isomorphism, exactly one of the following cases occurs.

\medskip\noindent
\medskip\noindent
(i)
$[B,B]=B$. In this case, in a suitable basis, the bilinear and trilinear products are
precisely those listed in the following table:

\medskip\noindent
$\renewcommand{\arraystretch}{1.4}
\begin{array}{|c|c|c|}
\hline
\textbf{\!\!Case\!\!}
&
{[x,y]=A(x\times y)}
&
\begin{array}{c}
{\langle x,y,z\rangle=P(x\times y)z},\\
\mbox{where $P$ is given by }
\{P_1,P_2,P_3\},\ \mbox{with }P_i:=P(e_i)
\end{array}
\\
\hline

\mathrm{(I.1)}
&
A=I_3
&
[[x,y],z]=(x\times y)\times z,
\quad\mbox{i.e. }\mathfrak{so}(3)
\\
\hline

\mathrm{(I.2)}
&
A=\operatorname{diag}(1,1,-1)
&
[[x,y],z],
\quad\mbox{i.e. }\mathfrak{so}(2,1)\simeq\mathfrak{sl}_2(\mathbb R)
\\
\hline

\mathrm{(II.1)}
&
{\text{\scriptsize
$A=
\begin{pmatrix}
0&-1&0\\
1&0&0\\
0&0&1
\end{pmatrix}$}}
&
\begin{array}{c}
\\[-4mm]
{\text{\scriptsize
$\left\{
\begin{pmatrix}
0&0&0\\
0&0&1\\
0&-\alpha&0
\end{pmatrix},
\;
\begin{pmatrix}
0&0&-1\\
0&0&0\\
-\beta&0&0
\end{pmatrix},
\;
\begin{pmatrix}
0&\alpha&0\\
\beta&0&0\\
0&0&0
\end{pmatrix}
\right\}$}}
\\[1mm]
\mbox{with }
(\alpha,\beta)\in
\{(\pm1,0),(a,a),(b,-b)\},
\quad a>0,\ b\in\mathbb R
\end{array}
\\
\hline

\mathrm{(II.2^{\pm})}
&
{\text{\scriptsize
$A=
\begin{pmatrix}
\pm1&-1&0\\
1&0&0\\
0&0&1
\end{pmatrix}$}}
&
\begin{array}{c}
\\[-4mm]
{\text{\scriptsize
$\left\{
\begin{pmatrix}
0&0&0\\
0&0&1\\
1&0&0
\end{pmatrix},
\quad
\begin{pmatrix}
0&0&-1\\
0&0&0\\
0&-1&0
\end{pmatrix},
\quad
\begin{pmatrix}
-1&0&0\\
0&1&0\\
0&0&0
\end{pmatrix}
\right\}$}}
\\[5mm]
\end{array}
\\
\hline
\end{array}$

\bigskip\noindent
{\rm (ii)}
$\dim[B,B]=2$.
Then, in a suitable basis,
\[
A=
\begin{pmatrix}
0&1&0\\
0&0&0\\
0&0&1
\end{pmatrix},
\]
and the ternary product is represented by
\[
P_1=0,
\qquad
P_2=
\begin{pmatrix}
-\dfrac{2t}{3}&0&0\\[3mm]
\dfrac{t^3}{36}&\dfrac{t}{3}&0\\[3mm]
-\dfrac{5t^2}{12}&1&\dfrac{2t}{3}
\end{pmatrix},
\qquad
P_3=
\begin{pmatrix}
2&0&0\\[1mm]
0&-2&-\dfrac{t}{3}\\[1mm]
t&0&-1
\end{pmatrix},
\]
where
$t\in\{0,1\}$.
The two values $t=0$ and $t=1$ give non-isomorphic Bol algebras.

\bigskip\noindent
{\rm (iii)}
$[B,B]=0$.
Then the binary product vanishes identically,
$A=0$,
and $B$ is a simple three-dimensional real Lie triple system.  In a
suitable basis its ternary product is
\[
\langle x,y,z\rangle
=
h(y,z)x-h(x,z)y,
\]
where $h(u,v)=uhv^T$, and
$
h\in
\left\{
 I_3,\,
 -I_3,\,
 \operatorname{diag}(1,1,-1),\,
 \operatorname{diag}(-1,-1,1)
\right\}.$
These four choices give four pairwise non-isomorphic Lie triple
systems.


\smallskip\noindent
Conversely, every Bol algebra occurring in {\rm (i)}, {\rm (ii)}, or
{\rm (iii)} is simple.

\medskip
The proofs are in Sections \ref{sec2.2}, \ref{sec:3.2}, \ref{sec:3.3}, \ref{sec:3.4}.

\section{Three dimensional real Bol algebras with $[B,B]=B$} \label{sec:2}

Let \(B\) be a real Bol algebra, with a (skew-symmetric) bilinear product
\[
[\cdot,\cdot]:\Lambda^2 B\to B,
\]
and a trilinear product
\[
\langle \cdot,\cdot,\cdot\rangle: \Lambda^2 B\times B\to B,
\]
(skew-symmetric in the first two variables)   satisfying the cyclic identity 
\[\langle x,y,z \rangle + \langle y,z,x \rangle + \langle z,x,y \rangle =0\tag{{C}}\] 
and the Bol identities
\begin{align}
{\langle x,y,[z,t]\rangle}&
{=[\langle x,y,z\rangle,t]-[\langle x,y,t\rangle,z]+\langle z,t,[x,y]\rangle+\big[[x,y],[z,t]\big]}
\tag{{B1}}
\\
\langle x,y,\langle z,t,u\rangle\rangle
&=
\langle \langle x,y,z\rangle,t,u\rangle
+\langle z,\langle x,y,t\rangle,u\rangle
+\langle z,t,\langle x,y,u\rangle\rangle.
\tag{B2}
\end{align}


\subsection{The binary product}
We assume throughout the present Section \ref{sec:2} that $\dim B=\dim [B,B]=3$.

\medskip
Fix an inner product and an orientation on \(B\), or equivalently declare an orthonormal basis. Then the Hodge operator gives an isomorphism
\[
*:\Lambda^2B\to B.
\]
Equivalently, the associated vector product
\[
x\times y:=*(x\wedge y)
\]
is described with respect to the orthonormal basis by means of the usual formulas 
\[e_1\times e_2=e_3,\quad e_2\times e_3=e_1,\quad e_3\times e_1=e_2.\]
Since \([B,B]=B\), there exists a unique invertible linear map $A\in \operatorname{GL}(B)$
such that
\[
[x,y]=A(x\times y).
\]

\medskip
The first result we give is the following Proposition \ref{simpleBBB}, whose converse is true, but with some exceptions that will be pointed out in Section \ref{sec:simplicity}. 

\begin{prop} \label{simpleBBB}
Let $B$ be a three-dimensional Bol algebra. If
$
[B,B]=B$, 
then $B$ is simple.
\end{prop}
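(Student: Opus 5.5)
Simplicity of a Bol algebra means that $B$ has no ideals other than $0$ and $B$. An ideal $I$ is a subspace with $[I,B]\subseteq I$, $\langle I,B,B\rangle\subseteq I$ and $\langle B,B,I\rangle\subseteq I$. The idea is to show that the binary product alone already forbids proper nonzero ideals when $[B,B]=B$ and $\dim B=3$, so the trilinear product will not be needed at all.

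Let $I$ be an ideal with $0\neq I\neq B$, so $\dim I\in\{1,2\}$. The key ingredient is the identity $[x,y]=A(x\times y)$ with $A\in\operatorname{GL}(B)$. For a fixed nonzero $u\in I$, the map $y\mapsto u\times y$ has image $u^{\perp}$, which has dimension $2$. Since $A$ is invertible, $[u,B]=A(u^{\perp})$ also has dimension $2$. As $[u,B]\subseteq I$, this gives $\dim I\ge 2$. This rules out $\dim I=1$.

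Now suppose $\dim I=2$ and choose a basis $u,v$ of $I$. Then $[I,B]=A(u^{\perp}+v^{\perp})$. Because $u$ and $v$ are linearly independent, the planes $u^{\perp}$ and $v^{\perp}$ are distinct. Hence $u^{\perp}+v^{\perp}=(\mathbb{R}u\cap\mathbb{R}v)^{\perp}=0^{\perp}=B$. Therefore $[I,B]=A(B)=B$, which is not contained in $I$, a contradiction. It follows that no proper nonzero ideal exists, so $B$ is simple.

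The one point I would check carefully is the definition of ideal in use. The argument only requires $[I,B]\subseteq I$, which holds under any standard notion of ideal of a Bol algebra, whether one-sided or two-sided, because the binary product is skew. I would also check that simplicity is not being defined with an extra nondegeneracy condition such as $\langle B,B,B\rangle\neq 0$. Under the usual definition, where only the absence of proper nontrivial ideals is required, the argument above is complete. I do not foresee any computational obstacle.
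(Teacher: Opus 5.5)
Your proof is correct, but it takes a different route from the paper. The paper does not use the cross-product representation here. It passes to the quotient $B/I$ and observes that $[B/I,B/I]=([B,B]+I)/I=B/I$. It then notes that a nonzero anticommutative algebra of dimension at most $2$ cannot satisfy this, because its bracket is spanned by at most the single vector $[e_1,e_2]$.

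You work inside $B$ and use $[x,y]=A(x\times y)$ with $A$ invertible. This shows that $y\mapsto[u,y]$ has rank exactly $2$ for every $u\neq0$, which rules out $\dim I=1$. It also shows that $u^{\perp}+v^{\perp}=B$ for independent $u,v$, which rules out $\dim I=2$.

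What each approach buys:
\begin{itemize}
\item The quotient argument is basis-free and more general. In any dimension it shows that a perfect anticommutative algebra has no ideals of codimension $1$ or $2$, and in dimension three that covers every proper nonzero ideal.
\item Your argument depends on the Hodge identification and the invertibility of $A$. In exchange it gives the sharper pointwise fact that every nonzero element has a two-dimensional image under the binary product.
\end{itemize}

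Both arguments use only the condition $[I,B]\subseteq I$. So, as you anticipated, each proves that the underlying anticommutative algebra is already simple, and the ternary product plays no role.
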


\begin{proof}
Suppose, by contradiction, that $B$ admits a nonzero proper Bol ideal $I$.
Then the quotient $B/I$ is a Bol algebra and its binary product is given by
\[
[x+I,y+I]=[x,y]+I.
\]
Consequently,
\[
[B/I,B/I]
=
\frac{[B,B]+I}{I}=\frac{B+I}{I}=\frac{B}{I}.
\]
On the other hand, since $B$ is three-dimensional and $I$ is nonzero and
proper, one has
\[
1\leq \dim(B/I)\leq 2.
\]
If $\operatorname{dim}(B/I)=1$, then $\operatorname{dim}[B/I,B/I]=0$. Also, if $\operatorname{dim}(B/I)=2$ and $B/I=\operatorname{Span}\{e_1,e_2\}$, then  $[B/I,B/I]=\operatorname{Span}\{[e_1,e_2]\}$ has dimension at most one.
Therefore
$[B/I,B/I]\neq B/I$
for every nonzero algebra $B/I$ of dimension at most two. Hence $B$ has no nonzero
proper ideals and is simple.
\end{proof}

Notice that \(A\) is not the tensor defining the binary product itself, but
its matrix after identifying \(\Lambda^2B\) with \(B\) by means of the Hodge
operator. Consequently, the natural action of $\operatorname{GL}(B)$ on the binary product
is not ordinary congruence 
between two matrices $A$ and $B$, that is, $B=kAk^T$ 
for some $k\in\operatorname{GL}_3(\mathbb R)$. 

Indeed, let $g\in \operatorname{GL}(B)$ be a change of basis. The binary product
transforms according to  
\[
[x,y]'
=
g[g^{-1}x,g^{-1}y] 
\]
(see \cite{Burde}). Moreover, the vector product satisfies
\[
(g^{-1}x)\times(g^{-1}y)
=
(\det g)^{-1}g^T(x\times y),
\]
which is equivalent to the classical identity
\[
\Lambda^2g^{-1}
=
(\det g)^{-1}g^T
\]
under the identification
\(\Lambda^2B\simeq B\).
Therefore
\[
[x,y]'
=
gA\!\left((\det g)^{-1}g^T(x\times y)\right),
\]
so that the matrix \(A\) transforms as
\begin{equation}\label{eq:action}
A\longmapsto
(\det g)^{-1}gAg^T. 
\end{equation}

However, the action \eqref{eq:action} associated with the binary product is not far from ordinary congruence, as the following shows:

\begin{lem}\label{lem:twisted-congruence}
If $A,B\in\operatorname{GL}_3(\mathbb R)$, then
\[
B=(\det g)^{-1}gAg^T
\]
for some $g\in\operatorname{GL}_3(\mathbb R)$, if and only if $B$ is congruent either
to $A$ or to $-A$.
\end{lem}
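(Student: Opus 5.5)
The plan is to absorb the scalar factor $(\det g)^{-1}$ into the congruence matrix by rescaling, which is possible because we are in odd dimension $3$.

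\emph{Forward direction.} Suppose $B=(\det g)^{-1}gAg^T$ and put $d=\det g\neq 0$. Set $k=|d|^{-1/2}g$. Then
\[
kAk^T=|d|^{-1}gAg^T=\operatorname{sgn}(d)\,(\det g)^{-1}gAg^T=\operatorname{sgn}(d)\,B .
\]
Hence $B=\pm kAk^T$, so $B$ is congruent to $A$ or to $-A$. Congruence is by $k$ itself, since $k(-A)k^T=-kAk^T$.

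\emph{Converse.} Suppose $B=\varepsilon kAk^T$ with $\varepsilon=\pm1$ and $k\in\operatorname{GL}_3(\mathbb R)$. Look for $g=\lambda k$ with $\lambda\in\mathbb R\setminus\{0\}$. Then
\[
(\det g)^{-1}gAg^T=\lambda^{-3}(\det k)^{-1}\lambda^2 kAk^T=\frac{1}{\lambda\det k}\,kAk^T .
\]
So we need $\lambda\det k=\varepsilon$, that is, $\lambda=\varepsilon/\det k$. This choice is always available. The key point is that the exponent mismatch $3-2=1$ is odd, so $\lambda$ can absorb both the magnitude and the sign of $\det k$.

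There is no real obstacle here. The only care needed is the sign bookkeeping in the forward direction: one must use $|d|^{1/2}$ rather than $d^{1/2}$, and this is exactly what produces the ambiguity $\pm A$. As a remark, invertibility of $A$ and $B$ is not needed in this argument. It is presumably assumed only because the lemma is applied in the setting $[B,B]=B$.
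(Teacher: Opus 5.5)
Your proof is correct and is essentially the paper's: the forward direction rescales by $|\det g|^{-1/2}$ exactly as the paper does (you merge its two sign cases into one), and your converse choice $g=\varepsilon(\det k)^{-1}k$ is the paper's $g=(\det k)^{-1}k$, composed with $g=-I_3$ when $\varepsilon=-1$; the paper uses $g=-I_3$ for exactly that step from $A$ to $-A$. Your remark that invertibility of $A$ is never used also matches the paper, which later applies the lemma to rank-two matrices on precisely that ground.
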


\begin{proof}
Suppose first that
\[
B=(\det g)^{-1}gAg^T.
\]
If $\det g>0$, then
\[
B=kAk^T,
\]
with $k=(\det g)^{-1/2}g$, whereas, if $\det g<0$, then
\[
B=-kAk^T,
\]
with $k=(-\det g)^{-1/2}g$. Thus $B$ is congruent either to $A$ or to $-A$.

Conversely, suppose that
$B=kAk^T$ and
put
$g=(\det k)^{-1}k$.
Since the dimension is three,
$\det g=(\det k)^{-2}$,
and hence
\[
(\det g)^{-1}gAg^T
=
(\det k)^2(\det k)^{-2}kAk^T
=
B.
\]
Finally, taking $g=-I_3$ gives
$(\det g)^{-1}gAg^T=-A$.
\end{proof}

\subsubsection{Normal forms of the binary operation}
\label{subsec:binary-normal-form}

Since, after a change of basis $g\in\operatorname{GL}_3(\mathbb R)$, the matrix associated to the binary product transforms into $\pm kAk^T$, with $k={(|\operatorname{det} g|)^{-1/2}}g$, the classification of the binary operation can be reduced to the
classification of real bilinear forms under congruence. We shall use the
explicit canonical form of Lee and Weinberg \cite[Theorem~II]{Lee}, obtained
from Thompson's classification of pencils of symmetric and skew-symmetric
matrices (cf. \cite{Thompson}, see also
\cite{Horn}).
Their theorem states that every real
matrix is congruent to a unique matrix in quasidiagonal form whose blocks
belong to the eight types
$\{m'_3,\,
\infty'_4,\,
\infty'_5,\,
o'_3,\,
o'_4,\,
\alpha'_3,\,
\beta'_4,\,
\beta'_5\}.$

For nonsingular matrices of order three the list simplifies drastically.
The block $m'_3$ is singular, while $o'_4$ and $\beta'_5$ have order at
least four. Thus only
\[\{
o'_3,\,
\infty'_4,\,
\infty'_5,\,
\alpha'_3,\,
\beta'_4\}
\]
can occur. More precisely, only the one- and three-dimensional
specializations of $o'_3$, and the two-dimensional specializations of
the other four types, are relevant.

\begin{theor}\label{thm:binary-normal-forms}
Consider the action of $\operatorname{GL}_3(\mathbb R)$ on $\operatorname{GL}_3(\mathbb R)$ defined by
\[
A\longmapsto(\det g)^{-1}gAg^T,
\qquad g\in\operatorname{GL}_3(\mathbb R).
\]
Then every invertible matrix is equivalent to exactly one matrix belonging
to one of the following nine types.

\medskip
\noindent
{\rm (I) Symmetric matrices.}

${\rm (I.1)}\qquad I_3$,

${\rm (I.2)}\qquad \operatorname{diag}(1,1,-1)$.

\medskip
\noindent
{\rm (II) Non-symmetric decomposable matrices.}

${\rm (II.1)}\qquad
\begin{pmatrix}
0&-1&0\\
1&0&0\\
0&0&1
\end{pmatrix}$,

${\rm (II.2}^{\pm}{\rm )}\qquad
\begin{pmatrix}
\pm1&-1&0\\
1&0&0\\
0&0&1
\end{pmatrix}$,

${\rm (II.3}^{\pm}{\rm )}\qquad
\begin{pmatrix}
\pm a&-1&0\\
1&\pm a&0\\
0&0&1
\end{pmatrix},
\qquad a>0$,

${\rm (II.4)}\qquad
\begin{pmatrix}
a&-1&0\\
1&-a&0\\
0&0&1
\end{pmatrix},
\qquad a>0,\quad a\ne1$.

\medskip
\noindent
{\rm (III) Indecomposable matrix.}

${\rm (III.1)}\qquad
\begin{pmatrix}
0&-1&1\\
1&1&0\\
1&0&0
\end{pmatrix}$.

\medskip\noindent
The two signs in {\rm (II.2)} and {\rm (II.3)} are non-equivalent, and
distinct values of $a$ in each one-parameter family give non-equivalent
matrices.
\end{theor}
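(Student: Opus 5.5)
The plan is to reduce the statement to the real congruence classification by means of Lemma \ref{lem:twisted-congruence}. By that lemma, two invertible matrices are equivalent under $A\mapsto(\det g)^{-1}gAg^T$ if and only if one is congruent to the other or to its negative. So the orbits are exactly the real congruence classes of invertible $3\times3$ matrices, taken modulo the identification $A\sim -A$. I would therefore list the Lee--Weinberg canonical forms \cite[Theorem~II]{Lee} of nonsingular $3\times 3$ real matrices, and then use the sign freedom to normalize and merge them.

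\textbf{Existence.} Since $m'_3$, $o'_4$ and $\beta'_5$ cannot occur, a nonsingular $3\times3$ canonical form has one of three shapes.
\begin{enumerate}
\item[(a)] It is a direct sum of three $1\times1$ blocks $(\pm1)$, i.e.\ a symmetric form. By Sylvester it has signature $(3,0)$, $(2,1)$, $(1,2)$ or $(0,3)$. Modulo $A\sim -A$ these collapse to (I.1) and (I.2).
\item[(b)] It is a $1\times1$ block $(\varepsilon)$ plus one of the two-dimensional specializations of $\infty'_4$, $\infty'_5$, $\alpha'_3$, $\beta'_4$.
\begin{itemize}
\item Multiplying by $-1$ if necessary, I take $\varepsilon=1$. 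This uses up the sign freedom.
\item The $2\times2$ blocks are nonsymmetric. I would rewrite each in the form $\begin{pmatrix}a&-1\\ 1&b\end{pmatrix}$ with skew part normalized to $\pm1$ (the sign can be fixed by the permutation congruence swapping the two coordinates).
\item By the Lee--Weinberg list, $(a,b)$ is then one of $(0,0)$, $(\pm1,0)$, $(\pm a,\pm a)$, $(a,-a)$. These give (II.1)--(II.4).
\item I must check that the $2\times 2$ block arising with $b=-a=\mp1$ is congruent to (II.2$^{\pm}$), which is why $a\neq1$ is imposed in (II.4).
\end{itemize}
\item[(c)] It is the three-dimensional specialization of $o'_3$. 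After a simple explicit congruence this becomes (III.1).
\end{enumerate}
Matching each Lee--Weinberg block to these specific representatives is routine but bookkeeping-heavy. I expect it to be the main obstacle, especially the degenerate parameter values where two families touch.

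\textbf{Uniqueness.} For this I would use invariants of the twisted action rather than relying only on uniqueness in \cite{Lee}, since the identification $A\sim-A$ could in principle merge classes. Write $S=\tfrac12(A+A^T)$ for the symmetric part. Then
\[
A'=(\det g)^{-1}gAg^T,\qquad S'=(\det g)^{-1}gSg^T,
\]
so $\det A'=\det A/\det g$ and $\det S'=\det S/\det g$. Hence:
\begin{itemize}
\item the ratio $\rho(A)=\det S/\det A$ is an invariant;
\item the inertia of $S$ up to global sign is an invariant;
\item the similarity class of the cosquare $A^{-T}A$ is an invariant;
\item decomposability is an invariant.
\end{itemize}

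These invariants separate the nine types as follows.
\begin{itemize}
\item The symmetric types are exactly those with $\rho=1$ (equivalently $A^{-T}A=I$). Among them, (I.1) and (I.2) are separated by definiteness.
\item (III.1) is the only one with non-diagonalizable cosquare.
\item (II.1) has $\rho=0$ with $S$ of rank one.
\item (II.2$^{\pm}$) have $\rho=0$ with $S=\operatorname{diag}(\pm1,0,1)$. Semidefiniteness of $S$ up to sign separates the two signs.
\item For (II.3$^{\pm}$), $\rho=a^2/(a^2+1)\in(0,1)$, which is injective in $a>0$. The sign is detected by $S=\operatorname{diag}(\pm a,\pm a,1)$ being definite or indefinite.
\item For (II.4), $\rho=a^2/(a^2-1)$. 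This is negative on $(0,1)$ and greater than $1$ on $(1,\infty)$, decreasing on each interval. Hence it is injective and its values are disjoint from those of (II.3).
\end{itemize}
Computing these invariants on the representatives completes the proof of non-equivalence.
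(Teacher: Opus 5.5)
Your existence half follows the paper's route: reduce via the twisted-congruence lemma to Lee--Weinberg canonical forms modulo $A\sim -A$, then match blocks. The paper, however, actually performs the matching you defer as ``routine.'' It gives $P_\varepsilon$ for $\infty'_4$, $\sqrt a\,Q_\varepsilon$ with $a=b^{-1}$ for $\beta'_4$, $P_\alpha$ with $a=\alpha^{-1}$ for $\alpha'_3$, and the reversal $R$ for $o'_3$. That matching is where the content of the existence claim lies.

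One remark in your sketch is wrong. The condition $a\neq 1$ in (II.4) is not there because the block at $a=1$ is congruent to (II.2$^\pm$). The block $\begin{pmatrix}a&-1\\ 1&-a\end{pmatrix}$ has determinant $1-a^2$, so at $a=1$ it is simply singular; this is Lee--Weinberg's $\alpha\neq\pm1$. The (II.2$^\pm$) block has determinant $1$, so the check you announce would fail.

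A related point: since you spend the global sign on the $1\times1$ block, the normalization $a>0$ in (II.4) must come from an honest congruence. For example, $k=\operatorname{diag}(1,-1)\begin{pmatrix}0&1\\ 1&0\end{pmatrix}$ sends $\begin{pmatrix}-a&-1\\ 1&a\end{pmatrix}$ to $\begin{pmatrix}a&-1\\ 1&-a\end{pmatrix}$.

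For irredundancy you take a genuinely different route. The paper relies on the uniqueness clause of Lee--Weinberg's Theorem~II. It then only argues that identifying $A$ with $-A$ creates no new coincidences, using inertia of the symmetric parts inside (II.2) and (II.3), bijectivity of $a=b^{-1}$ and $a=\alpha^{-1}$, and stability of the block families under $-1$. Your quantities $\rho=\det S/\det A$, the inertia of $S$ up to sign, the similarity class of $A^{-T}A$, and decomposability are indeed invariants of the twisted action. They give a self-contained proof that does not need Lee--Weinberg's uniqueness. They also rule out identifications $A\sim -A'$ between different parameter values explicitly, through the injectivity of $\rho$; the paper covers this only with a brief remark.

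Two entries of your table are false, however, and as written the separation breaks at them.

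First, $\rho=1$ does not characterize the symmetric types. For (III.1) one has $S=\begin{pmatrix}0&0&1\\ 0&1&0\\ 1&0&0\end{pmatrix}$, so $\det S=-1=\det A$ and $\rho=1$. Moreover $S$ has inertia $(2,1)$, exactly as in (I.2). So $\rho=1$ is not equivalent to $A^{-T}A=I$, and $\rho$ plus inertia does not separate (I.2) from (III.1). You must use the criterion $A^{-T}A=I$ itself.

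Second, (III.1) is not the only type with a non-diagonalizable cosquare. For the (II.2$^\pm$) block $M=\begin{pmatrix}\pm 1&-1\\ 1&0\end{pmatrix}$ one finds $M^{-T}M=\begin{pmatrix}-1&0\\ \pm 2&-1\end{pmatrix}$. What is true is that the cosquare of (III.1) is $\begin{pmatrix}1&0&0\\ 2&1&0\\ -2&-2&1\end{pmatrix}$, a single Jordan block of size $3$ with eigenvalue $1$. By contrast, (II.2$^\pm$) gives Jordan blocks of sizes $2$ and $1$ with eigenvalues $-1$ and $1$; in any case $\rho$ is $1$ versus $0$.

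With these two corrections your invariants do separate all nine types, since the remaining entries are right:
\begin{itemize}
\item (II.1), (II.2$^+$), (II.2$^-$) have $\rho=0$, with inertia (positive, negative, nullity) $(1,0,2)$, $(2,0,1)$, $(1,1,1)$ respectively;
\item (II.3$^\pm$) have $\rho=a^2/(a^2+1)\in(0,1)$, and definiteness of $S$ fixes the sign;
\item (II.4) has $\rho=a^2/(a^2-1)\in(-\infty,0)\cup(1,\infty)$, which is injective in $a$.
\end{itemize}
So the argument goes through, but as written it rests on two incorrect claims.
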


\begin{proof}
By Lemma~\ref{lem:twisted-congruence}, it suffices to specialize the
Lee--Weinberg real-congruence canonical form to nonsingular matrices of
order three and then identify a matrix with its negative.

For later reference, we make the passage from the relevant
Lee--Weinberg blocks to our representatives explicit.

\medskip
\noindent
\emph{The case $1+1+1$.}
The one-dimensional specialization of $o'_3$ is
\(
[\pm 1],
\)
thus a decomposition into three one-dimensional blocks is
$\operatorname{diag}(\varepsilon_1,\varepsilon_2,\varepsilon_3)$, with
$\varepsilon_i=\pm1$.
Up to permutation and change of all signs, there are exactly
two possibilities, that is,
$I_3$, and $\operatorname{diag}(1,1,-1)$,
which give {\rm (I.1)} and {\rm (I.2)}.

\medskip
\noindent
\emph{The case $2+1$.}
Modulo the global sign, the one-dimensional summand may be normalized to
$[1]$. There are four possible nonsingular two-dimensional
Lee--Weinberg blocks.

\begin{itemize}
    \item 
For $e=1$, the block $\infty'_5$ is
\[
\begin{pmatrix}
0&1\\
-1&0
\end{pmatrix},\]
which is congruent to its opposite, because
\[
\begin{pmatrix}
0&1\\
-1&0
\end{pmatrix}=\begin{pmatrix}
1&0\\
0&-1
\end{pmatrix}\begin{pmatrix}
0&-1\\
1&0
\end{pmatrix}\begin{pmatrix}
1&0\\
0&-1
\end{pmatrix}^T.
\]
After adjoining the one-dimensional block $[1]$, we obtain precisely
{\rm (II.1)}.

\item For $e=2$, the block $\infty'_4$ is
\[
\varepsilon
\begin{pmatrix}
0&1\\
-1&1
\end{pmatrix},
\qquad
\varepsilon=\pm1.
\]
With 
$P_\varepsilon=
\begin{pmatrix}
0&1\\
\varepsilon&0
\end{pmatrix}$, one obtains
$P_\varepsilon
\left[
\varepsilon
\begin{pmatrix}
0&1\\
-1&1
\end{pmatrix}
\right]
P_\varepsilon^T
=
\begin{pmatrix}
\varepsilon&-1\\
1&0
\end{pmatrix}$, and after adjoining $[1]$ these are precisely
{\rm (II.2}$^\pm$\rm ).

\item For $e=1$, the block $\beta'_4$ reduces to
\[
B_{\varepsilon,b}
=
\varepsilon
\begin{pmatrix}
1&b\\
-b&1
\end{pmatrix},
\qquad
b>0,\qquad \varepsilon=\pm1.
\]
If
$a=b^{-1}$ and
$Q_\varepsilon=
\begin{pmatrix}
1&0\\
0&-\varepsilon
\end{pmatrix}$, then
$(\sqrt a\,Q_\varepsilon)
B_{\varepsilon,b}
(\sqrt a\,Q_\varepsilon)^T
=
\begin{pmatrix}
\varepsilon a&-1\\
1&\varepsilon a
\end{pmatrix}$.
Thus the two signed $\beta'_4$ families give exactly
{\rm (II.3}$^\pm$\rm ), with the bijective change of parameter
$a=b^{-1}$.

\item For $e=1$, the block $\alpha'_3$ is
\[
A_\alpha=
\begin{pmatrix}
0&\alpha+1\\
1-\alpha&0
\end{pmatrix},
\qquad \alpha\in\mathbb R\setminus\{0\}.
\]
Nonsingularity requires
$\alpha\ne\pm1$.
Moreover,
\[
P
(-A_\alpha)
P^T=A_{-\alpha},
\qquad
P=
\begin{pmatrix}
0&1\\
-1&0
\end{pmatrix}.
\]
Hence, after passing from congruence to our equivalence relation, we may
assume
$0<\alpha\ne1$.
Put
$a=\alpha^{-1}$
and
\[
P_\alpha=
\begin{pmatrix}
\dfrac1{2\alpha}&1\\[2mm]
\dfrac1{2\alpha}&-1
\end{pmatrix}.
\]
Then
$\det P_\alpha=-\alpha^{-1}\ne0$,
and direct multiplication gives
\[
P_\alpha A_\alpha P_\alpha^T
=
\begin{pmatrix}
a&-1\\
1&-a
\end{pmatrix}.
\]
Thus $\alpha'_3$ gives precisely {\rm (II.4)}, with
$0<\alpha^{-1}=a\ne1$.
\end{itemize}

\medskip
\noindent
\emph{The indecomposable $3$-dimensional case.}
The only nonsingular indecomposable Lee--Weinberg block of order three
is the $e=3$ specialization of $o'_3$. From their definition,
\[
o'_3
=
\varepsilon(\Delta_3+S\Delta_3)
=
\varepsilon
\begin{pmatrix}
0&0&1\\
0&1&1\\
1&-1&0
\end{pmatrix},
\qquad
\varepsilon=\pm1.
\]
If
$R=
\begin{pmatrix}
0&0&1\\
0&1&0\\
1&0&0
\end{pmatrix}$, then
$R
\begin{pmatrix}
0&0&1\\
0&1&1\\
1&-1&0
\end{pmatrix}
R^T
=
\begin{pmatrix}
0&-1&1\\
1&1&0\\
1&0&0
\end{pmatrix}$.

Thus the two signs of the Lee--Weinberg block give the two matrices
$\pm A_0$, where
\[
A_0=
\begin{pmatrix}
0&-1&1\\
1&1&0\\
1&0&0
\end{pmatrix}.
\]
They belong to the same orbit by
Lemma~\ref{lem:twisted-congruence}. Hence this gives the single case
{\rm (III.1)}.

\medskip
It remains to verify that passing
from ordinary congruence to congruence modulo sign introduces no further
identifications among the representatives displayed above.

The symmetric cases are distinguished by inertia. The cases
{\rm (II.1)}, {\rm (II.2)}, {\rm (II.3)}, and {\rm (II.4)} belong to
different Lee--Weinberg block types.

Within {\rm (II.2)}, the symmetric parts of the two $3\times3$ matrices
have inertia
$(2,0,1)$ and $(1,1,1)$,
respectively, where the third entry denotes the nullity. Multiplication by
$-1$ interchanges the positive and negative indices but preserves the
nullity, so these two classes remain distinct.

Within {\rm (II.3)}, the symmetric part of the positive-sign
representative is positive definite, whereas that of the negative-sign
representative has inertia
$(1,2)$.
After multiplication by $-1$ the latter has inertia $(2,1)$, and hence
the two signs again remain inequivalent.

Finally, the changes of parameter
$a=b^{-1}$
and $a=\alpha^{-1}$
are bijective on the chosen parameter ranges. Therefore distinct admissible
values of $a$ correspond to distinct Lee--Weinberg canonical parameters.
Moreover, multiplication by $-1$ preserves each of the four
Lee--Weinberg block families occurring in the $2+1$ decomposition
(up to the parameter identifications already made above), and hence
cannot produce identifications between distinct block types.

The uniqueness assertion in Lee--Weinberg's Theorem~II now shows that
the displayed list is complete and irredundant.
\end{proof}

\begin{remark}
We stress that, according to Theorem~\ref{thm:binary-normal-forms},
for every three-dimensional Bol algebra $B$ with $[B,B]=B$ there exists
a basis $\{e_1,e_2,e_3\}$ such that
\[
[e_1,e_2]=Ae_3,\qquad
[e_2,e_3]=Ae_1,\qquad
[e_3,e_1]=Ae_2,
\]
where $A$ is one of the normal forms listed in the theorem.
These are only the possible binary products \emph{a priori}: we shall
see that imposing identities \textup{(C)} and \textup{(B1)} rules out
the cases {\rm (II.3$^\pm$)}, {\rm (II.4)}, and {\rm (III.1)}.
\end{remark}

\subsection{Imposing the Bol identities}\label{sec2.2}

We now determine, for each of the canonical matrices listed in
Theorem~\ref{thm:binary-normal-forms}, all compatible ternary products.
Throughout this section we write
\[
 [x,y]=A(x\times y),\qquad
 \langle x,y,z\rangle=P(x\times y)z,
\]
where \(A\in \mathrm{GL}_3(\mathbb R)\) is one of the possible matrices of
Theorem~\ref{thm:binary-normal-forms} and
\[
 P\colon \mathbb R^3\longrightarrow \operatorname{M}_3(\mathbb R)
\]
is linear.  We put
$P_i:=P(e_i)$ for $i=1,2,3$.
Thus
$$\label{P_i}
 \langle e_2,e_3,z\rangle=P_1z,\qquad
 \langle e_3,e_1,z\rangle=P_2z,\qquad
 \langle e_1,e_2,z\rangle=P_3z.$$

The transformation law for \(P\) is obtained from
\[
 \langle x,y,z\rangle'
 =
 g\langle g^{-1}x,g^{-1}y,g^{-1}z\rangle
\]
and from
\[
 (g^{-1}x)\times(g^{-1}y)
 =
 (\det g)^{-1}g^T(x\times y).
\]
Accordingly,
\[
 P'(u)
 =
 g\,P\!\left((\det g)^{-1}g^Tu\right)g^{-1}.
\]
In particular, after the binary product has been reduced to a fixed canonical
matrix \(A\), the remaining equivalences are induced by the stabilizer
\[
 \operatorname{Stab}(A)
 =
 \left\{
 g\in\mathrm{GL}_3(\mathbb R):
 (\det g)^{-1}gAg^T=A
 \right\}.
\]

We next rewrite the Bol identities in terms of \(A\) and \(P\), converting an unpleasant coordinate classification into a manageable matrix problem.

\begin{remark}
For each fixed canonical matrix $A$, identity
\textup{(B1)} may of course be expanded in the canonical
basis into a linear system in the $27$ entries of
$(P_1,P_2,P_3)$. We have used this system, both by direct computation
and by symbolic calculation, as an independent check of the results
below. We deliberately do not use the resulting row reductions as
proofs: instead, the operator form of the Bol identities allows us to
derive the solution spaces directly and, in particular, to exhibit the
structural reasons for the rigidity or nonexistence occurring in the
individual cases.
\end{remark}

\begin{prop}\label{prop:operator-identities}
Since
$[x,y]=A(x\times y)$ and $\langle x,y,z\rangle=P(x\times y)z$,
the first Bol identity is equivalent to
\[
 \bigl(
 P(\alpha)A-\operatorname{tr}(P(\alpha))A
 +AP(\alpha)^T
 \bigr)\beta
 =
P(\beta)A\alpha+A(A\alpha\times A\beta)
 \tag{B1$_{\mathrm{op}}$}
\]
for all \(\alpha,\beta\in\mathbb R^3\).  The cyclic identity is equivalent to
\[
 P_1e_1+P_2e_2+P_3e_3=0.
 \tag{C$_{\mathrm{op}}$}
\]
Finally, the second Bol identity is equivalent to
\[
 [P(\alpha),P(\beta)]
 =
 P\!\left(
 \bigl(\operatorname{tr}(P(\alpha))I-P(\alpha)^T\bigr)\beta
 \right)
 \tag{B2$_{\mathrm{op}}$}
\]
for all \(\alpha,\beta\in\mathbb R^3\).
Equivalently,
\[
 [P_i,P_j]
 =
 \operatorname{tr}(P_i)P_j
 -
 \sum_{k=1}^3(P_i)_{jk}P_k,
 \qquad 1\leq i,j\leq3.
 \tag{B2$_{ij}$}
\]
\end{prop}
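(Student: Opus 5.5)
The plan is to reduce all three identities to one linear-algebra fact about the vector product in $\mathbb R^3$. For any $M\in\operatorname{M}_3(\mathbb R)$,
\[
(Mz)\times t+z\times(Mt)=\bigl(\operatorname{tr}(M)I-M^T\bigr)(z\times t)\qquad(z,t\in\mathbb R^3).
\]
This is the infinitesimal form of the identity $(gz)\times(gt)=\det(g)\,g^{-T}(z\times t)$ already used above. One way to see it is to differentiate that identity at $g=I$ along $g=I+sM$. Alternatively, both sides are bilinear in $(z,t)$ and linear in $M$, so it suffices to check it on elementary matrices $M=E_{ij}$ and basis vectors.

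Throughout, put $\alpha=x\times y$ and $\beta=z\times t$. Since the Hodge map $\Lambda^2B\to B$ is onto, every pair $(\alpha,\beta)\in\mathbb R^3\times\mathbb R^3$ arises this way. Moreover both sides of each identity depend on $x,y$ only through $x\wedge y$, and on $z,t$ only through $z\wedge t$. Hence an identity holding for all $x,y,z,t$ is equivalent to the corresponding identity in $\alpha,\beta$.

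For (B1), expand each term with the key identity.
\begin{itemize}
\item The left-hand side is $\langle x,y,[z,t]\rangle=P(\alpha)A\beta$.
\item Writing $[u,v]=A(u\times v)$, the two middle terms combine as $[\langle x,y,z\rangle,t]-[\langle x,y,t\rangle,z]=A\bigl((P(\alpha)z)\times t+z\times(P(\alpha)t)\bigr)=A\bigl(\operatorname{tr}P(\alpha)I-P(\alpha)^T\bigr)\beta$.
\item Next, $\langle z,t,[x,y]\rangle=P(\beta)A\alpha$.
\item Finally, $[[x,y],[z,t]]=A(A\alpha\times A\beta)$.
\end{itemize}
Moving the middle terms to the left and noting that the scalar $\operatorname{tr}P(\alpha)$ commutes with $A$ gives exactly (B1$_{\mathrm{op}}$).

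For (C), I will first show the cyclic sum $S(x,y,z)$ is alternating. It is invariant under cyclic permutations by construction. Swapping $x,y$ gives
\[
S(y,x,z)=\langle y,x,z\rangle+\langle x,z,y\rangle+\langle z,y,x\rangle=-S(x,y,z),
\]
using skew-symmetry of $\langle\cdot,\cdot,\cdot\rangle$ in its first two slots. A trilinear map that is invariant under $3$-cycles and changes sign under a transposition is alternating. In dimension three it is therefore determined by $S(e_1,e_2,e_3)=P_3e_3+P_1e_1+P_2e_2$, which yields (C$_{\mathrm{op}}$).

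For (B2), apply both sides to $u$.
\begin{itemize}
\item The left-hand side is $P(\alpha)P(\beta)u$.
\item By the key identity, the first two terms on the right give $P\bigl((P(\alpha)z)\times t+z\times P(\alpha)t\bigr)u=P\bigl((\operatorname{tr}P(\alpha)I-P(\alpha)^T)\beta\bigr)u$.
\item The third term is $P(\beta)P(\alpha)u$.
\end{itemize}
Rearranging gives (B2$_{\mathrm{op}}$). Specializing to $\alpha=e_i$, $\beta=e_j$ and using $P_i^Te_j=\sum_k(P_i)_{jk}e_k$ together with linearity of $P$ gives (B2$_{ij}$). Conversely, (B2$_{ij}$) implies (B2$_{\mathrm{op}}$) by bilinearity in $(\alpha,\beta)$.

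The only real obstacle is establishing the key trace/transpose identity with the correct sign convention. Everything else is bookkeeping of signs, namely the order $t,z$ in the second (B1) term and the surjectivity argument for the equivalence. I would verify the key identity first on $M=E_{11}$ and on $M=E_{12}$ as sanity checks before relying on it.
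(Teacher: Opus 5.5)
Your proposal is correct and follows the paper's proof essentially step for step. Both use the substitution $\alpha=x\times y$, $\beta=z\times t$ together with surjectivity of the cross product, and both apply the identity $(Mz)\times t+z\times(Mt)=\bigl(\operatorname{tr}(M)I-M^T\bigr)(z\times t)$ to (B1) and (B2), then evaluate the alternating cyclic sum at $(e_1,e_2,e_3)$. You also justify three points the paper only asserts: you derive the key identity by differentiating the cofactor identity, you prove that the cyclic sum is alternating, and you recover (B2$_{\mathrm{op}}$) from (B2$_{ij}$) by bilinearity.
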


\begin{proof}
Let \(\alpha=x\times y\) and \(\beta=z\times t\). Since every vector of $\mathbb R^3$ can be written as a cross product, $\alpha$ and $\beta$ are arbitrary.
Substitution of
\[
 [x,y]=A\alpha,\qquad
 [z,t]=A\beta,\qquad
 \langle x,y,w\rangle=P(\alpha)w
\]
into the first Bol identity gives
\[
 P(\alpha)A\beta
 =
 A\bigl(P(\alpha)z\times t+z\times P(\alpha)t\bigr)
 +P(\beta)A\alpha
 +A(A\alpha\times A\beta).
\]
For every \(M\in M_3(\mathbb R)\) and \(z,t\in\mathbb R^3\),
\[
 (Mz)\times t+z\times(Mt)
 =
 \bigl(\operatorname{tr}(M)I-M^T\bigr)(z\times t).
\]
Taking \(M=P(\alpha)\) yields \((\mathrm{B1}_{\mathrm{op}})\).

Since the cyclic sum is an alternating trilinear map, in dimension three
it is enough to evaluate it at \((e_1,e_2,e_3)\). It gives
\[
 P(e_3)e_3+P(e_1)e_1+P(e_2)e_2=0,
\]
which is precisely (C$_{\mathrm{op}}$).

For the second Bol identity, put $D(x,y):=P(x\times y)$,
then
\[
 [D(x,y),D(z,t)]
 =
 D(D(x,y)z,t)+D(z,D(x,y)t).
\]
Using again the preceding vector-product identity gives
\[
 [P(\alpha),P(\beta)]
 =
 P\!\left(
 \bigl(\operatorname{tr}(P(\alpha))I-P(\alpha)^T\bigr)\beta
 \right),
\]
and evaluation at \(\alpha=e_i\), \(\beta=e_j\) gives
(B2$_{\mathrm{ij}}$).
\end{proof}


We give here a technical lemma that will provide us with a very effective tool.

\begin{lem}\label{nlemma} 
Let $B$ be an oriented Euclidean vector space of dimension $3$, let
$A\in \operatorname{GL}(B)$, and let
\[
Q:B\longrightarrow \operatorname{End}(B)
\]
be linear. Put
$\tau(u):=\operatorname{tr}Q(u)$, and
$C_u(v):=u\times v$.
Then $Q$ satisfies the homogeneous first Bol identity
\begin{equation}\label{HB1}
\bigl(Q(u)A-\tau(u)A+AQ(u)^T\bigr)v
=
Q(v)Au
\end{equation}
for $u,v\in B$ if and only if there exists a unique $S\in\operatorname{End}(B)$ such that
\begin{equation}\label{HB2}
Q(u)^T
=
SC_u-\frac12\operatorname{tr}(SC_u)\,I\end{equation}
for every $u\in B$, and
\begin{equation}\label{HB3}
Q(u)Av-Q(v)Au+AS(u\times v)=0
\qquad (u,v\in B).
\end{equation}
Equivalently, after eliminating $Q$ by means of \eqref{HB2}, the homogeneous
first Bol identity is reduced to the single condition (depending on $A$)
\begin{equation}\label{HB4}
-C_uS^TAv+C_vS^TAu
-\frac12\operatorname{tr}(SC_u)\,Av
+\frac12\operatorname{tr}(SC_v)\,Au
+AS(u\times v)=0.\end{equation}
The left-hand side of \eqref{HB4} is alternating in $u,v$; hence it is
enough to impose this identity for
$(u,v)\in\{(e_1,e_2),(e_2,e_3), (e_3,e_1)\}$.

In particular, the universal part of the homogeneous first Bol identity
is parametrized by the nine-dimensional space $\operatorname{End}(B)$. 
\end{lem}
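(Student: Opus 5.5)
The plan is to split \eqref{HB1} into an alternating part and a remainder, and to use the invertibility of $A$ together with the identification $\Lambda^2B\simeq B$ given by $\times$. First rewrite \eqref{HB1} as
\[
Q(u)Av-Q(v)Au \;=\; A\bigl(\tau(u)I-Q(u)^T\bigr)v .
\]
The left-hand side is bilinear and alternating in $(u,v)$, so the right-hand side must be alternating too. Since $A$ is invertible, this holds if and only if the bilinear map $\Phi(u,v):=(\tau(u)I-Q(u)^T)v$ is alternating. Every alternating bilinear map $B\times B\to B$ factors uniquely through $\Lambda^2B$. Composing with the Hodge isomorphism $*$ from Section~\ref{sec:2}, it factors uniquely as $\Phi(u,v)=-S(u\times v)$ for a unique $S\in\operatorname{End}(B)$. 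The sign is chosen to match \eqref{HB3}. Uniqueness of $S$ holds because the vectors $u\times v$ span $B$.

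With this $S$, the identity $\Phi(u,v)=-S(u\times v)$ reads $Q(u)^T=\tau(u)I+SC_u$, and \eqref{HB1} becomes exactly \eqref{HB3}. Taking traces gives $\tau(u)=3\tau(u)+\operatorname{tr}(SC_u)$, hence $\tau(u)=-\tfrac12\operatorname{tr}(SC_u)$. Substituting back yields \eqref{HB2}.

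Conversely, suppose $S$ is arbitrary and $Q$ is defined by \eqref{HB2}. Then $\operatorname{tr}Q(u)=\operatorname{tr}(SC_u)-\tfrac32\operatorname{tr}(SC_u)=-\tfrac12\operatorname{tr}(SC_u)$, so $Q(u)^T=\tau(u)I+SC_u$ holds automatically. Reading the first step backwards, \eqref{HB1} is then equivalent to \eqref{HB3}. This proves the equivalence. It also proves the final claim: every $S\in\operatorname{End}(B)$ gives a $Q$ satisfying the universal part \eqref{HB2}, and $S$ is determined by $Q$, so that part is parametrized by the nine-dimensional space $\operatorname{End}(B)$.

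For \eqref{HB4}, transpose \eqref{HB2}. Since $C_u^T=-C_u$, this gives $Q(u)=-C_uS^T-\tfrac12\operatorname{tr}(SC_u)I$. Inserting this into \eqref{HB3} produces the stated expression term by term. That expression is visibly antisymmetric under $u\leftrightarrow v$, because the first four terms come in swapped pairs and $u\times v$ is alternating. It is also bilinear, so it is an alternating bilinear map. Such a map vanishes identically if and only if it vanishes on the basis pairs $(e_1,e_2),(e_2,e_3),(e_3,e_1)$.

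The only delicate point is the first step: recognizing that the alternating left-hand side forces $\Phi$ to be alternating, which uses the invertibility of $A$. After that, the trace computation that pins down $\tau$ is a one-line consistency check, and I expect no real obstacle beyond keeping the signs consistent.
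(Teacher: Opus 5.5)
Your proof is correct and takes essentially the same route as the paper. Rewriting \eqref{HB1} as $Q(u)Av-Q(v)Au=A\bigl(\tau(u)I-Q(u)^T\bigr)v$ and using the invertibility of $A$ to conclude that $(u,v)\mapsto\bigl(\tau(u)I-Q(u)^T\bigr)v$ is alternating is the same step the paper gets by interchanging $u,v$ and adding; the factorization through $u\times v$, the trace computation for $\tau$, and the substitutions giving \eqref{HB3} and \eqref{HB4} then coincide with the paper's argument.
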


\begin{proof}
Interchanging $u$ and $v$ in \eqref{HB1} and adding the two resulting
identities gives $$
A\Bigl(
Q(u)^Tv+Q(v)^Tu-\tau(u)v-\tau(v)u
\Bigr)=0.$$
Since $A$ is invertible,
\begin{equation}\label{H_0}
Q(u)^Tv+Q(v)^Tu
=
\tau(u)v+\tau(v)u.
\end{equation}
Define
\[
F(u,v):=Q(u)^Tv-\tau(u)v.
\]
Then \eqref{H_0} says precisely that
$F(u,v)=-F(v,u)$,
so $F$ is an alternating bilinear map $B\times B\to B$.

Since $\dim B=3$, the cross product induces an isomorphism
$$
\Lambda^2B\longrightarrow B,
\qquad
u\wedge v\longmapsto u\times v.
$$
Consequently, there exists a unique linear map
$S\in\operatorname{End}(B)$
such that
\[
F(u,v)=S(u\times v).
\]
Thus
\[
Q(u)^Tv-\tau(u)v=S(u\times v)=SC_uv,
\]
for every \(v\in B\), and hence
\begin{equation}\label{HBaux}
Q(u)^T=\tau(u)I+SC_u.
\end{equation}

Taking traces in \eqref{HBaux} yields
\[
\tau(u)
=
3\tau(u)+\operatorname{tr}(SC_u),
\]
and therefore
\[
\tau(u)=-\frac12\operatorname{tr}(SC_u).
\]
Substitution in \eqref{HBaux} gives the claim \eqref{HB2}.

Conversely, suppose that \(Q\) is defined by \eqref{HB2}. Taking traces gives
\[
\tau(u)=-\frac12\operatorname{tr}(SC_u),
\]
and therefore \eqref{HB2} yields
\[
Q(u)^Tv-\tau(u)v=S(u\times v),
\]
which is alternating in \(u,v\). Hence \eqref{H_0} is automatically
satisfied.

It remains to recover the full homogeneous first Bol identity. Writing
\eqref{HB1} as
$$Q(u)Av-Q(v)Au
+
A\bigl(Q(u)^Tv-\tau(u)v\bigr)=0,
$$
and using
$$
Q(u)^Tv-\tau(u)v=S(u\times v),
$$
we obtain
the claim \eqref{HB3}.

Finally, transposing \eqref{HB2} and using $C_u^T=-C_u$, we obtain
$$
Q(u)
=
-C_uS^T-\frac12\operatorname{tr}(SC_u)\,I.
$$
Substitution into \eqref{HB3} gives \eqref{HB4}.

Every term in \eqref{HB4} changes sign when $u$ and $v$ are
interchanged. Hence its left-hand side is alternating, and in dimension
three it is enough to evaluate it on the three basic pairs
$(e_1,e_2)$, $(e_2,e_3)$, and $(e_3,e_1)$.
\end{proof}

\subsubsection{The symmetric cases} \label{sec:2.2.1}

We begin with the two symmetric canonical matrices
$A=I_3$, and $A=\operatorname{diag}(1,1,-1)$.
In both cases the binary product corresponds to a simple real Lie algebra: respectively
\(\mathfrak{so}(3)\) and
\(\mathfrak{so}(2,1)\cong\mathfrak{sl}_2(\mathbb R)\).

\begin{prop}\label{prop:symmetric-cases}
For each of the two symmetric canonical binary products, the compatible
ternary product is uniquely determined and is the canonical Lie triple
product
\[
 \langle x,y,z\rangle=[[x,y],z].
\]
Hence Cases (I.1) and (I.2) yield, respectively,
\(\mathfrak{so}(3)\) and
\(\mathfrak{so}(2,1)\cong\mathfrak{sl}_2(\mathbb R)\), regarded as Bol
algebras of Lie type.
\end{prop}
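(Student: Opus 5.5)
The plan is to reduce the problem to the homogeneous situation of Lemma~\ref{nlemma} by subtracting the Lie-type solution, and then show that the homogeneous solution space, cut down by the cyclic identity, is zero. Write $A=\operatorname{diag}(\varepsilon_1,\varepsilon_2,\varepsilon_3)$ with $\varepsilon_i=\pm1$; this covers both (I.1) and (I.2).

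\emph{Step 1: the Lie-type solution.} Since $A$ is symmetric, the binary product $[x,y]=A(x\times y)$ is the Lie bracket of $\mathfrak{so}(3)$ or $\mathfrak{so}(2,1)$. A Lie algebra equipped with $\langle x,y,z\rangle=[[x,y],z]$ is a Bol algebra:
\begin{itemize}
\item (C) is the Jacobi identity;
\item (B1) follows from $\operatorname{ad}[x,y]$ being a derivation together with skew-symmetry;
\item (B2) says that $\operatorname{ad}[x,y]^2$-type operators act as derivations of the triple product, which again follows from Jacobi.
\end{itemize}
In the language of Proposition~\ref{prop:operator-identities} this solution is $P_0(\alpha)z=A\bigl((A\alpha)\times z\bigr)$, that is, $P_0(\alpha)=AC_{A\alpha}$. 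It satisfies (B1$_{\mathrm{op}}$), (C$_{\mathrm{op}}$) and (B2$_{\mathrm{op}}$), and one can also verify this directly from $(Mz)\times t+z\times Mt=(\operatorname{tr}M\,I-M^T)(z\times t)$.

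\emph{Step 2: reduction to the homogeneous problem.} Let $P$ be any solution and put $Q:=P-P_0$. The inhomogeneous term $A(A\alpha\times A\beta)$ in (B1$_{\mathrm{op}}$) does not depend on $P$, and every other term is linear in $P$. Hence $Q$ satisfies the homogeneous identity \eqref{HB1}. Likewise $Q$ satisfies the linear condition $Q_1e_1+Q_2e_2+Q_3e_3=0$. By Lemma~\ref{nlemma}, $Q(u)=-C_uS^T-\tfrac12\operatorname{tr}(SC_u)I$ for a unique $S=(s_{ij})\in M_3(\mathbb R)$, subject to \eqref{HB4}. The task is therefore to show that \eqref{HB4} and the cyclic condition together force $S=0$. (B2) will then impose nothing further, since $P=P_0$ already satisfies it.

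\emph{Step 3: solving \eqref{HB4} for diagonal $A$.} Evaluate \eqref{HB4} on the pairs $(e_1,e_2)$, $(e_2,e_3)$, $(e_3,e_1)$. This gives nine scalar linear equations in the nine unknowns $s_{ij}$, with coefficients in $\{\pm\varepsilon_i\}$. Because $A$ is diagonal and $C_{e_i}$ is a signed permutation-like matrix, each equation involves only a few $s_{ij}$. I expect the equations to split according to the pairing of indices:
\begin{itemize}
\item the off-diagonal pairs $(s_{ij},s_{ji})$ with $i\neq j$ should be forced to vanish, or at least reduce to a small subspace;
\item the diagonal entries $s_{ii}$ should be constrained by relations involving the $\varepsilon$'s.
\end{itemize}
As a sanity check, $S=\lambda I$ with $A=I$ gives $Q(u)=-\lambda C_u$, and \eqref{HB3} evaluates to $-\lambda\,u\times v\neq0$, so scalar $S$ is already excluded. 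Any residual kernel, for example a subspace of symmetric or skew parts of $S$, will then be killed by the cyclic condition $\sum_i Q(e_i)e_i=0$. Using $Q(e_i)e_i=-C_{e_i}S^Te_i-\tfrac12\operatorname{tr}(SC_{e_i})e_i$, this condition becomes three explicit linear equations in the $s_{ij}$, essentially forcing the skew part of $S$ to vanish.

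\emph{Main obstacle.} The main difficulty is Step 3: organising the nine equations of \eqref{HB4} conceptually rather than by brute row reduction, while keeping track of the signs $\varepsilon_i$ so that both inertias are handled uniformly. To do this I would decompose $S=S_{\mathrm{sym}}+C_w$ with $w\in\mathbb R^3$, and compute the contributions of each part to \eqref{HB4} separately. The hope is that the $C_w$-part and the symmetric part decouple, with the diagonal of $A$ acting invertibly on each, and that (C$_{\mathrm{op}}$) is exactly what removes whatever survives. Once $S=0$, we have $P=P_0$, so the ternary product is $[[x,y],z]$, which proves the proposition.
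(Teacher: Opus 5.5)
Your approach is the same as the paper's: subtract the Lie-type solution $P_0$, apply Lemma~\ref{nlemma} to $Q=P-P_0$, and evaluate \eqref{HB4} on $(e_1,e_2)$, $(e_2,e_3)$, $(e_3,e_1)$. However, as written, Step~3 is only a prediction (``I expect'', ``the hope is''). That linear system is the entire content of the uniqueness claim, so the proposal stops exactly where the proof has to start.

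Carrying it out for $A=\operatorname{diag}(1,1,\varepsilon)$ closes the argument. For each unordered off-diagonal pair $\{s_{ij},s_{ji}\}$ you get the two equations $3s_{ij}+s_{ji}=0$ and $s_{ij}+3s_{ji}=0$. For example, $3s_{13}+s_{31}=0$ is the first component at $(e_1,e_2)$, and $s_{13}+3s_{31}=0$ is the third component at $(e_2,e_3)$. These $2\times2$ systems have determinant $8$, so every $s_{ij}$ with $i\neq j$ vanishes. The three remaining components are
\[
-s_{11}-s_{22}+\varepsilon s_{33}=0,\qquad s_{11}-s_{22}-\varepsilon s_{33}=0,\qquad -s_{11}+s_{22}-\varepsilon s_{33}=0,
\]
and they force $s_{11}=s_{22}=s_{33}=0$.

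Hence $S=0$ follows from the homogeneous first Bol identity alone. There is no residual kernel, the cyclic identity plays no role in uniqueness, and your splitting $S=S_{\mathrm{sym}}+C_w$ is unnecessary, since the system already decouples into these $2\times2$ blocks plus one diagonal $3\times3$ block.

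A small correction to Step~1: for the Lie-type product, (B2) says that $\operatorname{ad}[x,y]$ itself, not a square of it, is a derivation of $[[\cdot,\cdot],\cdot]$. This is immediate from the Jacobi identity.
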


\begin{proof}
For $A=I_3$ the binary algebra is $\mathfrak{so}(3)$, while for
$A=\operatorname{diag}(1,1,-1)$ it is
$\mathfrak{so}(2,1)\simeq\mathfrak{sl}_2(\mathbb R)$.
For a Lie algebra, the Lie-type ternary product
\[
\langle x,y,z\rangle_0=[[x,y],z]
\]
satisfies the cyclic identity and both Bol identities.
It remains to prove uniqueness.

Let $P_0$ be the corresponding map and let $P$ be any
other solution of the first Bol identity (B1$_{\mathrm{op}}$):
$$ \bigl(
 P(\alpha)A-\operatorname{tr}(P(\alpha))A
 +AP(\alpha)^T
 \bigr)\beta
 =
P(\beta)A\alpha+A(A\alpha\times A\beta).$$
Put
$Q=P-P_0$.
Since in (B1$_{\mathrm{op}}$) the constant terms cancel, $Q$ satisfies the
homogeneous first Bol identity.

By Lemma~\ref{nlemma}, there exists a unique
$S\in\operatorname{End}(B)$ such that
\[
Q(u)^T
=
SC_u-\frac12\operatorname{tr}(SC_u)\,I,
\]
and \(S\) satisfies \eqref{HB4}.
Write
$A=\operatorname{diag}(1,1,\varepsilon)$, with
$\varepsilon\in\{1,-1\}$,
so that the two values of $\varepsilon$ correspond to Cases
(I.1) and (I.2), respectively, and put
$S=(s_{ij})$.
Since the left-hand side of \eqref{HB4} is alternating, it is enough to
evaluate it on the three pairs
$(e_1,e_2)$, $(e_2,e_3)$ and $(e_3,e_1)$.
The corresponding equations are
\[
3s_{13}+s_{31}=0,\qquad
3s_{23}+s_{32}=0,\qquad
-s_{11}-s_{22}+\varepsilon s_{33}=0,
\]
\[
s_{11}-s_{22}-\varepsilon s_{33}=0,\qquad
s_{12}+3s_{21}=0,\qquad
s_{13}+3s_{31}=0,
\]
\[
3s_{12}+s_{21}=0,\qquad
-s_{11}+s_{22}-\varepsilon s_{33}=0,\qquad
s_{23}+3s_{32}=0.
\]
The first and the last equations involving \(s_{13},s_{31}\) give
$s_{13}=s_{31}=0$,
and similarly
$s_{23}=s_{32}=0$, $s_{12}=s_{21}=0$.
The remaining three equations are
\[
-s_{11}-s_{22}+\varepsilon s_{33}=0,\qquad
s_{11}-s_{22}-\varepsilon s_{33}=0,\qquad
-s_{11}+s_{22}-\varepsilon s_{33}=0,
\]
and they imply
$s_{11}=s_{22}=s_{33}=0$.
Hence $S=0$, and the preceding lemma gives $Q=0$. Therefore
$P=P_0$, proving uniqueness.
\end{proof}

\subsubsection{The nonsymmetric block family: common reduction} \label{sec:2.2.2}

We now consider the canonical matrices of type (II), all of which are
contained in the two-parameter block family
\[
 A=A(a,b):=
 \begin{pmatrix}
 a&-1&0\\
 1&b&0\\
 0&0&1
 \end{pmatrix},
 \qquad 1+ab\neq0.
\tag{II}
\]
The individual normal forms correspond to
\[
\begin{array}{c|c}
\text{case}&(a,b)\\ \hline
\mathrm{(II.1)}&(0,0)\\
\mathrm{(II.2^\pm)}&(\pm1,0)\\
\mathrm{(II.3^\pm)}&(\pm s,\pm s),\quad s>0\\
\mathrm{(II.4)}&(s,-s),\quad s>0,\ s\neq1.
\end{array}
\]

For this family, the first stage of the classification is common to all
subcases: one solves the affine linear system
(B1$_{\mathrm{op}}$) and (C$_{\mathrm{op}}$), and only afterwards
uses (B2$_{\mathrm{ij}}$).  This common treatment avoids repeating the
same operator reduction for each canonical representative.

\subsubsection{Case {(II.1)}}

Let \(a=b=0\) in (II), so that
\[
A=
\begin{pmatrix}
0&-1&0\\
1&0&0\\
0&0&1
\end{pmatrix}
=
\begin{pmatrix}
J&0\\
0&1
\end{pmatrix},
\mbox{ with }
J=
\begin{pmatrix}
0&-1\\
1&0
\end{pmatrix}.
\]
Put
$W=\langle e_1,e_2\rangle$
and define
\[
R(u)=P(A^{-1}u).
\]
Since
\[
Ae_1=e_2,\qquad Ae_2=-e_1,\qquad Ae_3=e_3,
\]
we have
\begin{equation}\label{P=AR}
P_1=R(e_2),\qquad
P_2=-R(e_1),\qquad
P_3=R(e_3),
\end{equation}
and we concentrate now on $R$.
Replacing $\alpha$ and $\beta$ in \textup{(B1$_{\operatorname{op}}$)} by $A^{-1}u$ and
$A^{-1}v$, respectively, gives
\begin{equation} \label{N}
R(v)u=
\bigl(R(u)+AR(u)^TA^{-1}
-\operatorname{tr}(R(u))I\bigr)v-A(u\times v).
\end{equation}
Write
\[
R(u)=
\begin{pmatrix}
X(u)&p(u)\\
q(u)^T&r(u)
\end{pmatrix}.
\]
By the linearity of $R$, the maps
\[
X:B\to M_2(\mathbb R),\qquad p,q:B\to W,\qquad r:B\to\mathbb R
\]
are linear.
Since, assuming directly  $X\in M_2(\mathbb R)$, one has
\[
-JX^TJ=\operatorname{tr}(X)I_2-X,
\]
we obtain
\[
R(u)+AR(u)^TA^{-1}-\operatorname{tr}(R(u))I
=
\begin{pmatrix}
-r(u)I_2&p(u)+Jq(u)\\
q(u)^T-p(u)^TJ&r(u)-\operatorname{tr}X(u)
\end{pmatrix}. \nonumber 
\]
If $u=x+se_3$ and $v=y+te_3$, with $x,y\in W$, then
\[
A(u\times v)
=
\binom{tx-sy}{-x^TJy}. \nonumber
\]
Thus \eqref{N} is equivalent to
\begin{equation}\label{3}
X(v)x+s\,p(v)
=(s-r(u))y+t\bigl(p(u)+Jq(u)-x\bigr),
\end{equation}
\begin{equation}\label{4}
q(v)^Tx+s\,r(v)
=
\bigl(q(u)^T-p(u)^TJ\bigr)y
+\bigl(r(u)-\operatorname{tr}X(u)\bigr)t+x^TJy.
\end{equation}
Take first $u=x=x_1e_1+x_2e_2$ and $v=y=y_1e_1+y_2e_2$ in $W$. Thus $s=t=0$, and equation \eqref{3} gives
\begin{equation}
\label{X(y)x}
X(y)x=-r(x)y.
\end{equation}
Let
$r(e_1)=r_1$, $r(e_2)=r_2$,
thus
\[
r(e_1)y=r_1y
=
\begin{pmatrix}
y_1r_1\\
y_2r_1
\end{pmatrix},
\qquad
r(e_2)y=r_2y
=
\begin{pmatrix}
y_1r_2\\
y_2r_2
\end{pmatrix}.
\]
Hence
$$X(y)x=-r(x_1e_1+x_2e_2)y=-x_1(r_1y)-x_2(r_2y)=-x_1\begin{pmatrix}
y_1r_1\\
y_2r_1
\end{pmatrix}-x_2\begin{pmatrix}
y_1r_2\\
y_2r_2
\end{pmatrix}$$
and therefore
\begin{equation}\label{5}
X(y)=-
\begin{pmatrix}
y_1r_1&y_1r_2\\
y_2r_1&y_2r_2
\end{pmatrix},\mbox{ and }\operatorname{tr}X(y)
=-y_1r_1-y_2r_2
=-r(y).
\end{equation}
Alternatively, this also follows, in fact, directly from \eqref{X(y)x}: for fixed $y$, the identity
$X(y)x=-r(x)y$ for every $x\in W$ means that
\[
X(y)=-\,y\otimes r|_W.
\]
Since $\operatorname{tr}(y\otimes r|_W)=r(y)$, we obtain
$\operatorname{tr}X(y)=-r(y)$,
as in \eqref{5}.

Taking $u=v=e_3$ in \eqref{3} and \eqref{4}  gives
\begin{equation}\label{6}
q(e_3)=0,\qquad \operatorname{tr}X(e_3)=0.
\end{equation}
On the other hand, taking $u=x\in W$ and $v=e_3$ in \eqref{4}
gives
\[
r(x)=\operatorname{tr}X(x).
\]
Together with \eqref{5}, this yields
$r(x)=0$ and $X(x)=0$, for
$x\in W$.

Now put
$X=X(e_3)$ and $r_3=r(e_3)$.
Taking $u=e_3$, $v=x\in W$ in \eqref{3} gives
\[
p(x)=(1-r_3)x, \nonumber
\]
while \eqref{4} gives $p(e_3)=0$. Taking instead $u=x\in W$,
$v=e_3$ in \eqref{3}, we obtain
\begin{equation}\label{u9}
Jq(x)=(X+r_3I)x,
\end{equation}
and therefore
\[
q(x)=-J(X+r_3I)x.
\]

Let $Q\in \operatorname{M}_2(\mathbb R)$ be defined by $q(x)=Qx$. Equation
\eqref{4} for $x,y\in W$ now reduces to
\begin{equation}\label{u10}
Q-Q^T=r_3J.
\end{equation}
By \eqref{6}, $\operatorname{tr}X=0$, and therefore $-JX$ is
symmetric. From \eqref{u9},
\[
Q=-JX-r_3J,
\]
so that
\[
Q-Q^T=-2r_3J.
\]
Comparison with \eqref{u10} gives $3r_3J=0$, hence
$r_3=0$.
Consequently
\[
R(x+se_3)=
\begin{pmatrix}
sX&x\\
(-JXx)^T&0
\end{pmatrix}. \nonumber 
\]
Again by \eqref{6}, $\operatorname{tr}X=0$, thus we write
\[
X=
\begin{pmatrix}
-\gamma&\alpha\\
\beta&\gamma
\end{pmatrix}.
\]
By \eqref{P=AR},
we have
\[
P_1=R(e_2),\qquad
P_2=-R(e_1),\qquad
P_3=R(e_3),
\]
hence
\[
P_1=
\begin{pmatrix}
0&0&0\\
0&0&1\\
\gamma&-\alpha&0
\end{pmatrix},
\;
P_2=
\begin{pmatrix}
0&0&-1\\
0&0&0\\
-\beta&-\gamma&0
\end{pmatrix},\;
P_3=
\begin{pmatrix}
-\gamma&\alpha&0\\
\beta&\gamma&0\\
0&0&0
\end{pmatrix},
\tag{P-II.1}
\]
with $\alpha,\beta,\gamma\in\mathbb R$. The cyclic identity is automatically satisfied. A direct substitution
into \textup{(B$2_{ij})$} shows that the second Bol identity is satisfied
identically for all $\alpha,\beta,\gamma$. Thus Case {\rm II.1}
gives a three-parameter family of compatible ternary products.
(Notice that the three-dimensional freedom obtained here agrees with the homogeneous reduction of Lemma \ref{nlemma}, for which the corresponding space of endomorphisms $S$ has dimension $3$.)

\medskip
It remains to determine the residual equivalence. A direct computation
from the symmetric and skew-symmetric parts of $A$ gives
\[
\operatorname{Stab}(A)=
\left\{
g_B=
\begin{pmatrix}
B&0\\
0&1
\end{pmatrix}
:\ B\in\operatorname{SL}_2(\mathbb R)
\right\}.
\]
Under the induced action
\[
P'(u)=g_BP(g_B^Tu)g_B^{-1},
\]
if
\[
B=
\begin{pmatrix}
a&b\\
c&d
\end{pmatrix},
\qquad ad-bc=1,
\]
then the parameters transform according to
\[
\alpha'
 =a^2\alpha+2ab\gamma-b^2\beta,
\]
\[
\beta'
 =-c^2\alpha-2cd\gamma+d^2\beta,
\]
\[
\gamma'
 =ac\alpha+(ad+bc)\gamma-bd\beta.
\]
Equivalently, putting
\[
M_Q=
\begin{pmatrix}
\alpha&\gamma\\
\gamma&-\beta
\end{pmatrix},
\]
one has
$M_Q'\,=\,BM_QB^T$. 
Hence the residual classification is precisely the
$\operatorname{SL}_2(\mathbb R)$-congruence classification of the
real binary quadratic form
\[
Q(X,Y)=\alpha X^2+2\gamma XY-\beta Y^2.
\]
The invariant
\[
D:=\gamma^2+\alpha\beta=-\det M_Q,
\]
together with the inertia of $M_Q$, determines the nondegenerate
orbits, while the rank-one case splits according to sign. Thus one
may choose the normal forms
\begin{equation} \label{SL2II.1}
\begin{array}{c|c}
\text{condition}&(\alpha,\beta,\gamma)\\ \hline
Q=0&(0,0,0)\\
D=0,\ Q\neq0&(1,0,0)\ \text{or}\ (-1,0,0)\\
D>0&(\sqrt D,\sqrt D,0)\\
D<0&(\sqrt{-D},-\sqrt{-D},0)\
      \text{or}\ (-\sqrt{-D},\sqrt{-D},0)
\end{array}, 
\end{equation} 
that we sum up as
$(\alpha,\beta)\in\{(\pm1,0),(a,a),(b,-b)\}$, with $a>0, b\in\mathbb{R}$, and $\gamma=0$.

\subsubsection{Cases {(II.$2^\pm$)}}

Let $(a,b)=(\pm1,0)$ in (II), thus
\[
 A=
 \begin{pmatrix}
 \varepsilon&-1&0\\
 1&0&0\\
 0&0&1
 \end{pmatrix},
 \qquad \varepsilon\in\{1,-1\}.
\]
Here we prove that the affine system
(B1$_{\mathrm{op}}$), (C$_{\mathrm{op}}$) has the one-parameter
solution
\begin{equation} \label{eqP1P2P3} 
 P_1=
 \begin{pmatrix}
 0&0&0\\
 0&0&1\\
 1&-\lambda&0
 \end{pmatrix},
 \;
 P_2=
 \begin{pmatrix}
 0&0&-1\\
 0&0&0\\
 0&-1&0
 \end{pmatrix},\;
 P_3=
 \begin{pmatrix}
 -1&\lambda&0\\
 0&1&0\\
 0&0&0
 \end{pmatrix},\end{equation} 
with $\lambda\in\mathbb R$. Indeed,
let
\[
A=
\begin{pmatrix}
C&0\\
0&1
\end{pmatrix},
\mbox{ with }
C=
\begin{pmatrix}
\varepsilon&-1\\
1&0
\end{pmatrix},
\mbox{ and } \varepsilon\in\{\pm1\}.
\]
Put
$W=\langle e_1,e_2\rangle$
and write, for $u\in\mathbb R^3$,
\[
R(u)=P(A^{-1}u)
=
\begin{pmatrix}
X(u)&p(u)\\
q(u)^T&r(u)
\end{pmatrix},
\]
where $X(u)\in M_2(\mathbb R)$ and $p(u),q(u)\in W$.

As before, the first Bol identity  is \eqref{N}. 
Set
\[
J=
\begin{pmatrix}
0&-1\\
1&0
\end{pmatrix},
\qquad
D:=-CJ=
\begin{pmatrix}
1&\varepsilon\\
0&1
\end{pmatrix}.
\]
For
\[
X=\begin{pmatrix}x_{11}&x_{12}\\x_{21}&x_{22}\end{pmatrix}
\]
one has
$$
X+CX^TC^{-1}-\operatorname{tr}(X)I
=
\begin{pmatrix}
-\varepsilon x_{21}&
x_{21}+\varepsilon(x_{11}-x_{22})\\
0&\varepsilon x_{21}
\end{pmatrix}
=:\Phi(X).
$$
Moreover, if $u=x+se_3$ and $v=y+te_3$, with $x,y\in W$, then
$$
A(u\times v)
=
\binom{tDx-sDy}{-x^TJy}.
$$
We first consider $x,y\in W$. The $W$-component of \eqref{N} is
\begin{equation}
\label{a3}
X(y)x=\bigl(\Phi(X(x))-r(x)I\bigr)y.
\end{equation}
Write
\[
X_i=X(e_i),\qquad r_i=r(e_i),\qquad i=1,2.
\]
Evaluating \eqref{a3} at $(e_i,e_j)$ gives
\begin{equation}
\label{a4}
X_1=
\begin{pmatrix}
-r_1&\varepsilon r_1-r_2\\
0&0
\end{pmatrix},\qquad
X_2=
\begin{pmatrix}
-\varepsilon r_1&-r_1+\varepsilon r_2\\
-r_1&-\varepsilon r_1-r_2
\end{pmatrix}.
\end{equation}

\noindent 
Taking $u=v=e_3$ in \eqref{N} gives
\begin{equation}
\label{a5}
q(e_3)=0,\qquad \operatorname{tr}X(e_3)=0.
\end{equation}
On the other hand, taking $u=x\in W$ and $v=e_3$, the last component
of \eqref{N} gives
\begin{equation}
\label{a6}
r(x)=\operatorname{tr}X(x).
\end{equation}
From \eqref{a4} and \eqref{a6} we obtain successively
$r_1=0$ and $r_2=0$,
and consequently
\begin{equation}
\label{a7}
X(x)=0,\qquad r(x)=0
\end{equation}
for $x\in W$.
Taking now $u=e_3$, $v=x\in W$ in the last component of
\eqref{N}, and using \eqref{a5}--\eqref{a7}, gives
$
p(e_3)=0$.

\medskip
It remains to determine the operators involving $e_3$. Put
\[
X:=X(e_3)=
\begin{pmatrix}
m&n\\
z&-m
\end{pmatrix},
\qquad
\rho:=r(e_3).
\]
There exist matrices $U,V\in M_2(\mathbb R)$ such that
\[
p(x)=Ux,\qquad q(x)=Vx
\qquad (x\in W).
\]
The equations obtained from $(u,v)=(e_3,x)$ and $(x,e_3)$ give
\begin{equation}
\label{a9}
U=\Phi(X)-\rho I+D,
\qquad
V=C^{-1}(X-U+D). \nonumber 
\end{equation}
Explicitly,
\begin{equation}
\label{a10}
U=
\begin{pmatrix}
1-\rho-\varepsilon z&
\varepsilon(2m+1)+z\\
0&1-\rho+\varepsilon z
\end{pmatrix},
\quad
V=
\begin{pmatrix}
z&-m+\rho-\varepsilon z\\
-m-\rho&\varepsilon(m+\rho)-n
\end{pmatrix}.
\end{equation}

\noindent
Finally, the last component of \eqref{N} for $x,y\in W$ is
equivalent to
\begin{equation}
\label{a11}
V-V^T=U^TC^{-1}+J.
\end{equation}
Substituting \eqref{a10} into \eqref{a11} gives
\[\left\{\begin{array}{l}
    3\rho=0\\
2\varepsilon z-3\rho=0\\
\varepsilon(-2m+\rho-2)-2z=0
\end{array}\right.,\mbox{ hence }\left\{\begin{array}{l}
     \rho=0\\ z=0\\ m=-1 
\end{array},\right.
\]
while $n$ remains arbitrary. Writing $n=\lambda$, we obtain
\[
X=
\begin{pmatrix}
-1&\lambda\\
0&1
\end{pmatrix},
\qquad
U=
\begin{pmatrix}
1&-\varepsilon\\
0&1
\end{pmatrix},
\qquad
V=
\begin{pmatrix}
0&1\\
1&-\varepsilon-\lambda
\end{pmatrix}. \nonumber 
\]

\noindent 
Thus
\[
R_1=
\begin{pmatrix}
0&0&1\\
0&0&0\\
0&1&0
\end{pmatrix},
\;
R_2=
\begin{pmatrix}
0&0&-\varepsilon\\
0&0&1\\
1&-\varepsilon-\lambda&0
\end{pmatrix},\;
R_3=
\begin{pmatrix}
-1&\lambda&0\\
0&1&0\\
0&0&0
\end{pmatrix}.
\]
Since
\[
Ae_1=\varepsilon e_1+e_2,\qquad
Ae_2=-e_1,\qquad
Ae_3=e_3,
\]
we have
\[
P_1=\varepsilon R_1+R_2,\qquad
P_2=-R_1,\qquad
P_3=R_3.
\]
Therefore, we obtain \eqref{eqP1P2P3} 
which is independent of the sign $\varepsilon$.
Moreover,
\[
P_1e_1+P_2e_2+P_3e_3=0,
\]
so the cyclic identity is automatically satisfied.

Again, (B2$_{\operatorname{ij}}$) is satisfied identically, so no further
restriction on $\lambda$ occurs by this.

In order to show that we can reduce $\lambda$ to zero, we compute $\operatorname{Stab}(A)$, noticing that it must leave both the symmetric and the skew-symmetric parts $S$ and $K$ of $A$ invariant under the twisted action. Since
$\ker K$ is spanned by $e_3$,
the condition
\[
gKg^T=(\det g)K
\]
implies that
\[
g=\begin{pmatrix}B&u\\0&c\end{pmatrix},
\qquad B\in GL_2(\mathbb R).
\]
Using $BJB^T=(\det B)J$, comparison with
$gKg^T=(\det g)K$ gives $c=1$. On the other hand,
\[
S=\begin{pmatrix}
\varepsilon&0&0\\
0&0&0\\
0&0&1
\end{pmatrix},
\]
and the condition $gSg^T=(\det g)S$ gives $u=0$, $\det B=1$, and
\[
B\begin{pmatrix}\varepsilon&0\\0&0\end{pmatrix}B^T
=
\begin{pmatrix}\varepsilon&0\\0&0\end{pmatrix}.
\]
It follows that
\[
B=\begin{pmatrix}\sigma&t\\0&\sigma\end{pmatrix},
\qquad \sigma\in\{\pm1\},\quad t\in\mathbb R.
\]
Therefore
\[
\operatorname{Stab}(A)=
\left\{
\begin{pmatrix}
\sigma&t&0\\
0&\sigma&0\\
0&0&1
\end{pmatrix}
:\ \sigma=\pm1,\ t\in\mathbb R
\right\}.
\]
In particular, taking $\sigma=1$, the induced action on the family above is
\[
\lambda\longmapsto\lambda+2t.
\]
Hence $t=-\lambda/2$ reduces every member of the family to $\lambda=0$.

Thus each of the two non-equivalent binary products
(II.2\(^{+}\)) and (II.2\(^{-}\)) admits, up to the stabilizer
of \(A\), a unique compatible ternary product, represented by
\[
 P_1=
 \begin{pmatrix}
 0&0&0\\
 0&0&1\\
 1&0&0
 \end{pmatrix},
 \quad
 P_2=
 \begin{pmatrix}
 0&0&-1\\
 0&0&0\\
 0&-1&0
 \end{pmatrix},
 \quad
 P_3=
 \begin{pmatrix}
 -1&0&0\\
 0&1&0\\
 0&0&0
 \end{pmatrix}.
 \tag{P-II.2}
\]

\subsubsection{Cases {(II.$3^\pm$)}} \label{sec.2.2.5}


Let $(a,b)=(\pm s,\pm s)$ (with $s>0$)
in (II), thus
\[
 A=
 \begin{pmatrix}
 \varepsilon s&-1&0\\
 1&\varepsilon s&0\\
 0&0&1
 \end{pmatrix},
 \qquad s>0,\qquad \varepsilon\in\{1,-1\}.
\]
Put \(c=\varepsilon s\), so that \(c\neq0\).

We first show that the first Bol identity has at most one solution.
Suppose that \(P\) and \(\widetilde P\) are two solutions of
\((B1_{\mathrm{op}})\), and put \(Q=P-\widetilde P\).
By Lemma \ref{nlemma}, there exists a unique
\(S=(s_{ij})\in\operatorname{End}(B)\) satisfying \eqref{HB4}.

Evaluating \eqref{HB4} on
\((e_1,e_2)\), \((e_2,e_3)\), and \((e_3,e_1)\)
gives, among the resulting equations,
\[
s_{12}+3s_{21}=0,\qquad
3s_{12}+s_{21}=0,
\]
and hence
$s_{12}=s_{21}=0$.
Moreover,
\[
s_{13}+3s_{31}=0,\qquad
s_{23}+3s_{32}=0,
\]
while the first two equations corresponding to \((e_1,e_2)\) give
\[
3cs_{13}+cs_{31}+s_{23}-s_{32}=0,
\]
\[
3cs_{23}+cs_{32}-s_{13}+s_{31}=0.
\]
Thus
$s_{13}=-3s_{31}$, and $s_{23}=-3s_{32}$,
and the last two equations reduce to
\[
2cs_{31}+s_{32}=0,\qquad
s_{31}-2cs_{32}=0.
\]
Consequently
\[
(1+4c^2)s_{31}=0,
\]
thus we obtain
$s_{31}=s_{32}=s_{13}=s_{23}=0$.

The remaining equations are
$$-c(s_{11}+s_{22})+s_{33}=0,
\qquad
c(s_{11}-s_{22})-s_{33}=0,$$
$$-c(s_{11}-s_{22})-s_{33}=0.$$
Hence \(s_{33}=0\), \(s_{11}=s_{22}\), and, since \(c\neq0\), we obtain
$s_{11}=s_{22}=0$.
Therefore \(S=0\), and Lemma \ref{nlemma} gives $Q=0$.
Thus (B1$_{\mathrm{op}}$) has at most one solution.

Direct substitution shows that a solution is obtained by setting
\(R(u)=P(A^{-1}u)\) and
\[
R_1=
\begin{pmatrix}
0&0&-1\\
0&0&-c\\
3/c&1&0
\end{pmatrix},
\qquad
R_2=
\begin{pmatrix}
0&0&c\\
0&0&-1\\
-1&3/c&0
\end{pmatrix},
\]
\[
R_3=
\begin{pmatrix}
0&-c-3/c&0\\
c+3/c&0&0\\
0&0&2
\end{pmatrix}.
\]
By uniqueness, this is the unique solution of the first Bol identity.

Since
\[
Ae_1=ce_1+e_2,\qquad
Ae_2=-e_1+ce_2,\qquad
Ae_3=e_3,
\]
we have
\[
P_1=cR_1+R_2,\qquad
P_2=-R_1+cR_2,\qquad
P_3=R_3.
\]
Hence
$P_1e_1+P_2e_2+P_3e_3
=
6e_3\neq0$.
Thus the unique solution of the first Bol identity fails the cyclic
identity. Therefore neither Case {\rm (II.3)$^+$} nor
Case {\rm (II.3)$^-$} admits a compatible ternary product.

\subsubsection{Case {\rm (II.4)}}

Let $(a,b)=(s,-s)$ (with $s>0$, $s\neq1$) in (II), thus
\[
A=
\begin{pmatrix}
s&-1&0\\
1&-s&0\\
0&0&1
\end{pmatrix},
\qquad s>0,\qquad s\neq1.
\]
We first determine the homogeneous freedom in the first Bol identity.
Suppose that \(P\) and \(\widetilde P\) are two solutions of
\((B1_{\mathrm{op}})\), and put
\[
Q=P-\widetilde P.
\]
By Lemma~\ref{nlemma}, there exists a unique
\(S=(s_{ij})\in\operatorname{End}(B)\) satisfying \eqref{HB4}.
Evaluating \eqref{HB4} on
$(e_1,e_2)$, $(e_2,e_3)$, $(e_3,e_1)$
gives, after multiplication by $2$, respectively,
\begin{equation}\label{c1}
\begin{cases}
3ss_{13}+ss_{31}+s_{23}-s_{32}=0,\\
-3ss_{23}-ss_{32}-s_{13}+s_{31}=0,\\
-2ss_{11}+2ss_{22}+2s_{12}-2s_{21}+2s_{33}=0,
\end{cases}
\end{equation}
\begin{equation}\label{c2}
\begin{cases}
2ss_{11}+2ss_{22}+s_{12}-s_{21}-2s_{33}=0,\\
-s(s_{12}+3s_{21})=0,\\
s_{13}+3s_{31}=0,
\end{cases}\end{equation}
and
\begin{equation}\label{c3}
\begin{cases}
s(3s_{12}+s_{21})=0,\\
-2ss_{11}-2ss_{22}+s_{12}-s_{21}-2s_{33}=0,\\
s_{23}+3s_{32}=0.
\end{cases}\end{equation}
Since $s>0$, the second equation of \eqref{c2} and the first equation of
\eqref{c3} give
$$
s_{12}+3s_{21}=0,\qquad
3s_{12}+s_{21}=0,$$
and hence
$s_{12}=s_{21}=0$.
The remaining diagonal equations become
$$-ss_{11}+ss_{22}+s_{33}=0,
\qquad
ss_{11}+ss_{22}-s_{33}=0,
\qquad
-ss_{11}-ss_{22}-s_{33}=0.
$$
The last two imply
$s_{33}=0$, and $s_{11}+s_{22}=0$,
while the first gives $s_{11}=s_{22}$. Therefore
$s_{11}=s_{22}=s_{33}=0$.
Moreover,
$$s_{13}+3s_{31}=0,\qquad
s_{23}+3s_{32}=0,
$$
so
$
s_{13}=-3s_{31}$, and
$s_{23}=-3s_{32}$.
The first two equations in \eqref{c1} then reduce to
$$
2ss_{31}+s_{32}=0,\qquad
s_{31}+2ss_{32}=0.
$$
Consequently
$$
(1-4s^2)s_{31}=0.
$$
Hence, if \(s\neq\frac12\), then
$
s_{31}=s_{32}=s_{13}=s_{23}=0$,
and therefore $S=0$. By Lemma \ref{nlemma}, \(Q=0\), so
(B1$_{\mathrm{op}}$) has at most one solution.

Direct substitution shows that, for $s\neq\frac12$, a solution is obtained by setting
$R(u)=P(A^{-1}u)$ and
$$
R_1=
\begin{pmatrix}
0&0&-1\\
0&0&s\\
3/s&1&0
\end{pmatrix},
\qquad
R_2=
\begin{pmatrix}
0&0&s\\
0&0&-1\\
-1&-3/s&0
\end{pmatrix},\qquad
R_3=
\begin{pmatrix}
0&3/s-s&0\\
3/s-s&0&0\\
0&0&2
\end{pmatrix}.
$$
By uniqueness, this is the unique solution of the first Bol identity
for $s\neq\frac12$. Since
$$
Ae_1=se_1+e_2,\qquad
Ae_2=-e_1-se_2,\qquad
Ae_3=e_3,
$$
we have
$$
P_1=sR_1+R_2,\qquad
P_2=-R_1-sR_2,\qquad
P_3=R_3.
$$
It follows immediately that
$P_1e_1+P_2e_2+P_3e_3=6e_3\neq0$.

Thus the cyclic identity is impossible whenever
$s\neq\frac12$.

\medskip
It remains to consider the exceptional value $s=\frac12$.
In this case the preceding homogeneous system has a one-dimensional
solution space. Indeed, the diagonal and $(s_{12},s_{21})$-equations
still give
$$
s_{11}=s_{12}=s_{21}=s_{22}=s_{33}=0.$$
Moreover,
$$
s_{13}=-3s_{31},\qquad
s_{23}=-3s_{32},$$
and the two remaining equations reduce to
$s_{32}=-s_{31}$.
Thus, writing $s_{31}=\mu$,$$
s_{32}=-\mu,\qquad
s_{13}=-3\mu,\qquad
s_{23}=3\mu,$$
so the homogeneous solution space is one-dimensional.

Direct substitution shows that the affine first Bol identity has the
one-parameter family
$$R_1=
\begin{pmatrix}
0&-4\lambda&-1\\
-2\lambda&-2\lambda&1/2\\
6&1&-2\lambda
\end{pmatrix},
\qquad
R_2=
\begin{pmatrix}
2\lambda&2\lambda&1/2\\
4\lambda&0&-1\\
-1&-6&2\lambda
\end{pmatrix},\qquad
R_3=
\begin{pmatrix}
0&11/2&\lambda\\
11/2&0&\lambda\\
0&0&2
\end{pmatrix}.$$
Since the homogeneous solution space is one-dimensional, this family
exhausts all solutions of (B1$_{\mathrm{op}}$).
From
$$
P_1=\frac12R_1+R_2,\qquad
P_2=-R_1-\frac12R_2,\qquad
P_3=R_3,
$$
we obtain now
$$
P_1e_1+P_2e_2+P_3e_3
=
6\lambda e_1+6\lambda e_2+6e_3,
$$
which is never zero. Therefore the cyclic identity is incompatible with
the first Bol identity for every $s>0$, $s\neq1$.
Hence Case {\rm (II.4)} admits no compatible ternary product.

\subsubsection{The case (III.1)} 

We finally consider
\[
A=
\begin{pmatrix}
0&-1&1\\
1&1&0\\
1&0&0
\end{pmatrix}.
\;\mbox{ noting that }\;
A^{-1}=
\begin{pmatrix}
0&0&1\\
0&1&-1\\
1&1&-1
\end{pmatrix}.
\]

We first show that the first Bol identity determines the ternary product uniquely. 

Suppose that \(P\) and \(\widetilde P\) are two solutions of
\((B1_{\mathrm{op}})\), and put
$Q=P-\widetilde P$.
Then $Q$ satisfies the homogeneous first Bol identity. By 
Lemma \ref{nlemma}, there exists a unique $S\in\operatorname{End}(B)$ such that
\[
Q(u)^T
=
SC_u-\frac12\operatorname{tr}(SC_u)\,I,
\]
and \(S\) satisfies
\begin{equation}\label{C_u}
-C_uS^TAv+C_vS^TAu
-\frac12\operatorname{tr}(SC_u)\,Av
+\frac12\operatorname{tr}(SC_v)\,Au
+AS(u\times v)=0.\end{equation}
Since the left-hand side is alternating in \(u,v\), it is enough to
consider the three pairs
$(e_1,e_2)$, $(e_1,e_3)$ and $(e_2,e_3)$.

Writing
\[
S=
\begin{pmatrix}
s_{11}&s_{12}&s_{13}\\
s_{21}&s_{22}&s_{23}\\
s_{31}&s_{32}&s_{33}
\end{pmatrix},
\]
the three instances of \eqref{C_u} give respectively
\begin{equation}\label{n1}
\begin{cases}
s_{23}-s_{32}+4s_{33}=0,\\
-s_{13}+3s_{23}+s_{31}+s_{32}=0,\\
2s_{12}+s_{13}-2s_{21}-2s_{22}-s_{31}=0,
\end{cases}\end{equation}
\begin{equation}\label{n2}
\begin{cases}
-s_{23}-3s_{32}=0,\\
-s_{12}+2s_{13}+s_{21}-2s_{22}+2s_{31}=0,\\
-3s_{12}-s_{21}=0,
\end{cases}\end{equation}
and
\begin{equation}\label{n3}
\begin{cases}
s_{12}-s_{13}-s_{21}-2s_{22}+s_{31}=0,\\
s_{12}+3s_{21}=0,\\
4s_{11}=0.
\end{cases}\end{equation}

The last equation of \eqref{n2} and the second equation of \eqref{n3} yield
$s_{12}=s_{21}=0$.
The first equation of \eqref{n3} then gives
$s_{31}=s_{13}+2s_{22}$,
whereas the second equation of \eqref{n2} gives
$s_{13}-s_{22}+s_{31}=0$, hence
$2s_{13}+s_{22}=0$.
The third equation of \eqref{n1}, together with $s_{31}=s_{13}+2s_{22}$, gives
$-4s_{22}=0$, thus
$s_{22}=s_{13}=s_{31}=0$.

The first equation of \eqref{n2} gives
$s_{23}=-3s_{32}$,
while the second equation of \eqref{n1} now gives
$3s_{23}+s_{32}=0$,
therefore
$s_{23}=s_{32}=0$.
The first equation of \eqref{n1} then yields $s_{33}=0$, and the last
equation of \eqref{n3} yields $s_{11}=0$, hence
$S=0$.
By the preceding lemma, $Q=0$. Thus \((B1_{\mathrm{op}})\) has at most
one solution.

It remains only to exhibit it. Direct substitution shows that
$$
P_1=
\begin{pmatrix}
2&0&0\\
1&0&2\\
0&-1&0
\end{pmatrix},
\qquad
P_2=
\begin{pmatrix}
-1&2&-2\\
0&0&0\\
0&0&1
\end{pmatrix},
\qquad
P_3=
\begin{pmatrix}
0&1&2\\
0&0&-1\\
0&0&0
\end{pmatrix}
$$
satisfy \((B1_{\mathrm{op}})\), and hence constitute its unique solution.

However,
$P_1e_1+P_2e_2+P_3e_3=6e_1\neq0$,
thus the cyclic identity is incompatible with the first Bol identity.
Therefore Case \((\mathrm{III}.1)\) admits no compatible ternary product.

\subsubsection{Classification for {$[B,B]=B$}}

Combining the preceding arguments gives the classification of all compatible
ternary products for the nine canonical binary products.

\begin{theor}
\label{thm:rank-three-classification}
Let \(B\) be a three-dimensional real Bol algebra such that
$[B,B]=B$.
After a change of basis, its binary product is one of the nine canonical
products of Theorem~\ref{thm:binary-normal-forms}.  The compatible ternary
products are as follows.

\begin{enumerate}
\item
For the symmetric cases \rm{(I.1)} and \rm{(I.2)}, the ternary product is
uniquely
\[
 \langle x,y,z\rangle=[[x,y],z].
\]

\item
For Case \rm{(II.1)}, the compatible ternary products form the
one-parameter family given in Table \eqref{SL2II.1}.  

\item
For each of the two cases \rm{(II.2\(^{\pm}\))}, the compatible ternary
product is unique up to equivalence (see (\rm{P-II.2})).

\item
The cases \rm{(II.3\(^{\pm}\))}, \rm{(II.4)}, and \rm{(III.1)} admit no
compatible ternary product.
\end{enumerate}
\end{theor}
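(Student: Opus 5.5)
The theorem is essentially a collation of the case analyses already carried out in Section~\ref{sec2.2}, so the plan is to assemble them in order, checking that every branch of Theorem~\ref{thm:binary-normal-forms} has been covered exactly once.

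\emph{Reduction to a canonical $A$.} Since $[B,B]=B$, the matrix $A$ with $[x,y]=A(x\times y)$ is invertible. By Theorem~\ref{thm:binary-normal-forms}, through the twisted action \eqref{eq:action} and Lemma~\ref{lem:twisted-congruence}, a change of basis $g$ brings $A$ to exactly one of the nine representatives. The ternary product is transported by $P'(u)=gP((\det g)^{-1}g^Tu)g^{-1}$. Hence it suffices, for each fixed canonical $A$, to solve (B1$_{\mathrm{op}}$), (C$_{\mathrm{op}}$) and (B2$_{\mathrm{op}}$) of Proposition~\ref{prop:operator-identities}. The only residual freedom is then $\operatorname{Stab}(A)$.

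\emph{Case by case.}
\begin{enumerate}
\item For (I.1) and (I.2), I will quote Proposition~\ref{prop:symmetric-cases}. There the Lie-type product solves all identities, and Lemma~\ref{nlemma} forces $S=0$ for the difference $Q$, which gives uniqueness.
\item For (II.1), I will use the explicit solution (P-II.1) of (B1$_{\mathrm{op}}$) and (C$_{\mathrm{op}}$), together with the verification of (B2$_{ij}$). I then pass to the $\operatorname{SL}_2(\mathbb R)$-congruence action on the binary quadratic form with matrix $M_Q$, which yields the normal forms \eqref{SL2II.1}.
\item For (II.2$^\pm$), I will take the one-parameter solution \eqref{eqP1P2P3} and use the stabilizer element with parameter $t=-\lambda/2$ to normalize $\lambda=0$. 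This gives (P-II.2).
\item For (II.3$^\pm$), (II.4) and (III.1), Lemma~\ref{nlemma} shows that the solution set of (B1$_{\mathrm{op}}$) is a single point, or a line in the exceptional subcase $s=\tfrac12$ of (II.4). In every instance $P_1e_1+P_2e_2+P_3e_3\neq0$, so (C$_{\mathrm{op}}$) fails.
\end{enumerate}

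\emph{Main obstacle.} The delicate step is the verification that (B2$_{ij}$) holds identically in cases (II.1) and (II.2$^\pm$). I would do this by a direct computation of the nine commutators $[P_i,P_j]$ for the explicit matrices. Two reductions help: the relation is antisymmetric in $(i,j)$ once (B1) holds, and the matrices are sparse. I also need the stabilizer computation for (II.1), namely that $\operatorname{Stab}(A)$ consists of the matrices $\mathrm{diag}(B,1)$ with $B\in\operatorname{SL}_2(\mathbb R)$. I would obtain it, as for (II.2), by requiring $g$ to preserve separately the symmetric and skew-symmetric parts of $A$ under the twisted action.

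Finally, I will note a mismatch in the statement. The (II.1) family is described there as one-parameter, whereas the table \eqref{SL2II.1} is really indexed by the discrete choices $(\pm1,0)$ together with the continuous parameters $a>0$ and $b\in\mathbb R$. I would state the result as given by that table.
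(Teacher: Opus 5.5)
Your proposal is correct and follows the paper's own argument. You reduce $A$ via Theorem~\ref{thm:binary-normal-forms}, quote Proposition~\ref{prop:symmetric-cases}, use the explicit solutions (P-II.1) and \eqref{eqP1P2P3} with their stabilizer reductions, and in the remaining cases combine uniqueness via Lemma~\ref{nlemma} with the failure of (C$_{\mathrm{op}}$). Your extra observation that (B2$_{ij}$) is antisymmetric in $(i,j)$ once (B1$_{\mathrm{op}}$) holds (because $A$ is invertible) is valid and shortens the check, and your remark on the phrase ``one-parameter family'' concerns wording only, not a gap.
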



\section{Simplicity questions} \label{sec:simplicity}
A natural question arising from the classification is whether simplicity can be
detected from the binary product.  In Lie theory, every simple algebra is
perfect.  For Bol algebras the situation is subtler, because the notion of
ideal involves both the binary and the ternary products \cite{kuzmin}.  We therefore study
the relation between simplicity and the subspace
\[
 [B,B]=\operatorname{Span}\{[x,y]\mid x,y\in B\}.
\]
Indeed, we recall (see Proposition \ref{simpleBBB}) that if $[B,B]=B$, then $B$ is simple.

We will see that the converse is in general false: the two Bol algebras $B$ given below in Theorem \ref{thm:rank-two} are simple, though $\operatorname{dim}[B,B]=2$, and the four Lie triple systems $B$ given below in Theorem \ref{thm:simpleLTS} are simple, though $\operatorname{dim}[B,B]=0$. However, these are the only simple three-dimensional real Bol algebras with $[B,B]\ne B$.

\subsection{Operator form of the Bol identities}

We shall repeatedly use the following elementary criterion.

\begin{lem} \label{bolideal} 
Let $I\subseteq B$ be a linear subspace. Then $I$ is a Bol ideal if and only if
\[
[I,B]\subseteq I, 
\qquad
P(I\times B)B\subseteq I. 
\]
\end{lem}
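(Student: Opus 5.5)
We start from the definition of a Bol ideal as it is used in \cite{kuzmin}: a subspace $I\subseteq B$ is an ideal when $[I,B]\subseteq I$ and $\langle I,B,B\rangle+\langle B,B,I\rangle\subseteq I$. The trilinear product is skew-symmetric in its first two arguments, so $\langle B,I,B\rangle=\langle I,B,B\rangle$ is already covered. The plan is to show that, for a subspace which is already closed under the binary bracket with $B$, the condition $\langle B,B,I\rangle\subseteq I$ follows from $\langle I,B,B\rangle\subseteq I$. Once that is done, the remaining step is to rewrite $\langle I,B,B\rangle$ in terms of $P$.

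\textbf{Translation.} Since $\langle x,y,z\rangle=P(x\times y)z$, we have
\[
\langle I,B,B\rangle=\operatorname{Span}\{P(i\times b)c \mid i\in I,\ b,c\in B\}=P(I\times B)B .
\]
So the pair of conditions in the statement is exactly ``$[I,B]\subseteq I$ and $\langle I,B,B\rangle\subseteq I$''. The ``only if'' direction is then immediate from the definition.

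\textbf{Removing the third slot.} For the converse, take $x,y\in B$ and $i\in I$. The cyclic identity (C) gives
\[
\langle x,y,i\rangle=-\langle y,i,x\rangle-\langle i,x,y\rangle=\langle i,y,x\rangle-\langle i,x,y\rangle .
\]
Both terms on the right lie in $\langle I,B,B\rangle\subseteq I$, hence $\langle B,B,I\rangle\subseteq I$. This argument uses only cyclicity and skew-symmetry, so neither Bol identity is needed.

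\textbf{Anticipated obstacle.} The only delicate point is which definition of ideal is adopted. If the reference definition additionally requires invariance in every slot, or closure under inner derivations $D(x,y)$ acting on $I$, then the third-slot computation above covers that requirement as well, since $D(x,y)i=\langle x,y,i\rangle$. I would state the definition explicitly at the start and then run the argument above.
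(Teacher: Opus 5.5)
Your proposal is correct and follows the paper's proof step by step. Skew-symmetry disposes of the second slot, the cyclic identity (C) gives $\langle B,B,I\rangle\subseteq I$ from $\langle I,B,B\rangle\subseteq I$, and $\langle I,B,B\rangle=P(I\times B)B$ by the definition of $P$; your rewriting $\langle x,y,i\rangle=\langle i,y,x\rangle-\langle i,x,y\rangle$ makes the last membership slightly more explicit than in the paper.
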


\begin{proof}
The first condition is precisely the condition
$
[I,B]\subseteq I$ 
for the binary product. 
The second 
condition is equivalent to
$ 
\langle I,B,B \rangle \subseteq I$
(see \cite{Lister}). By skew-symmetry of the ternary product in its first two variables
gives
\[
\langle B,I,B \rangle \subseteq I
\iff
\langle I,B,B \rangle \subseteq I.
\]
Finally, if \(z\in I\), the cyclic identity gives
\[
\langle x,y,z\rangle
=
-\langle y,z,x\rangle-\langle z,x,y\rangle\in I,
\]
and hence
$\langle B,B,I \rangle \subseteq I$.

Hence these conditions are exactly the defining conditions for a Bol ideal.
\end{proof}

\subsection{The rank-two case} \label{sec:3.2} 

Assume throughout this subsection that
$\operatorname{rank}A=2$.


We first reduce the binary product to a convenient list of normal-form
families.

As observed in Lemma~\ref{lem:twisted-congruence}, the argument relating
the twisted action \eqref{eq:action}
to ordinary congruence does not use the invertibility of $A$.
Therefore, for arbitrary \(A,A'\in M_3(\mathbb R)\),
\[
A'=(\det g)^{-1}gAg^T
\]
for some \(g\in\operatorname{GL}_3(\mathbb R)\) if and only if \(A'\)
is congruent to \(A\) or to \(-A\).

We may consequently apply the real congruence theorem of
Lee--Weinberg \cite[Theorem~II]{Lee} to matrices of rank two.
In dimension three there are only four possible block patterns.

If the canonical form is a sum of three one-dimensional blocks, then
exactly one of them is zero and, up to permutation and simultaneous
change of sign, we obtain
\[
A_{S,+}=
\begin{pmatrix}
1&0&0\\
0&1&0\\
0&0&0
\end{pmatrix},
\qquad
A_{S,-}=
\begin{pmatrix}
1&0&0\\
0&-1&0\\
0&0&0
\end{pmatrix}.
\]

A rank-one two-dimensional indecomposable block, together with a
nonzero one-dimensional block, gives, up to congruence,
\[
A_{N_1}=
\begin{pmatrix}
0&1&0\\
0&0&0\\
0&0&1
\end{pmatrix}.
\]

If instead the two-dimensional block is nonsingular and the remaining
one-dimensional block is zero, the relevant Lee--Weinberg
two-dimensional canonical blocks may all be collected, for the present
purposes, in the single family
\[
A_{N_2}(a,b)=
\begin{pmatrix}
a&-1&0\\
1&b&0\\
0&0&0
\end{pmatrix},
\qquad
ab+1\neq0.
\]
This parametrization is deliberately redundant: different pairs
\((a,b)\) may represent different Lee--Weinberg canonical block types,
and no irredundancy is needed in the argument below.

Finally, the only indecomposable three-dimensional Lee--Weinberg block
of rank two is the \(e=1\) specialization of \(m'_3\). It is congruent
to
\[
A_{N_3}=
\begin{pmatrix}
0&0&-1\\
1&0&0\\
0&0&0
\end{pmatrix}.
\]

Hence every rank-two matrix is equivalent under the twisted action to
a matrix belonging to one of the five families
\[
A_{S,+},\qquad
A_{S,-},\qquad
A_{N_1},\qquad
A_{N_2}(a,b),\qquad
A_{N_3}.
\]
The list is complete but is not intended to be irredundant.

We now determine which of these binary products can occur in a simple
Bol algebra.

\subsubsection{The symmetric cases}

Let
$A=A_{S,\varepsilon}
=
\operatorname{diag}(1,\varepsilon,0)$,
where $\varepsilon=\pm1$,
and put
$W=\operatorname{im}A
=
\operatorname{Span}\{e_1,e_2\}$, thus
$[W,B]\subseteq W$.

Solving \rm{(B1$_{\mathrm{op}}$)} together with the cyclic identity gives, for both
values of $\varepsilon$,
\[
P_1=
\begin{pmatrix}
-\dfrac{2u}{3}&0&\alpha\\[3mm]
-\dfrac{v}{3}&-\dfrac{u}{3}&\beta\\[3mm]
0&0&\dfrac{u}{3}
\end{pmatrix},\qquad
P_2=
\begin{pmatrix}
-\dfrac{v}{3}&-\dfrac{u}{3}&\gamma\\[3mm]
0&-\dfrac{2v}{3}&\delta\\[3mm]
0&0&\dfrac{v}{3}
\end{pmatrix},
\qquad
P_3=
\begin{pmatrix}
0&0&u\\
0&0&v\\
0&0&0
\end{pmatrix}.
\]
The $(1,3)$-entry of \rm{(B2$_{ij}$)} for $(i,j)=(3,1)$ and the
$(2,3)$-entry for $(i,j)=(3,2)$ give
\[
2u^2=0,
\qquad
2v^2=0,
\]
thus
$u=v=0$.
Consequently
$
P_i(B)\subseteq W$,
for  $i=1,2,3$.
Since
$W\times B=B$,
it follows that
\[ 
P(W\times B)B=P(B)B\subseteq W.
\]
By Lemma~\ref{bolideal}, $W$ is a proper Bol ideal, and we obtain the following:

\begin{prop}\label{prop:rank2-symmetric}
No simple $3$-dimensional Bol algebra has a symmetric binary matrix.
\end{prop}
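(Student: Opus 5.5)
Within the present subsection, where $\operatorname{rank}A=2$, I would prove the proposition by exhibiting a proper Bol ideal, namely $W=\operatorname{im}A=\operatorname{Span}\{e_1,e_2\}$, for both values of $\varepsilon$. The statement must be read in this context: the rank-three symmetric cases (I.1), (I.2) are simple by Proposition~\ref{simpleBBB}, and the symmetric matrix $A=0$ gives the simple Lie triple systems treated later. Up to the twisted equivalence \eqref{eq:action} (which by the remark after Lemma~\ref{lem:twisted-congruence} reduces to congruence up to sign, also for singular $A$), a symmetric rank-two $A$ is $A_{S,\pm}=\operatorname{diag}(1,\pm1,0)$. The binary condition $[W,B]\subseteq W$ is immediate, since $[B,B]=\operatorname{im}A=W$. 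By Lemma~\ref{bolideal}, it remains only to show $P(W\times B)B\subseteq W$. Since $W\times B=B$, this is the statement that every $P_i$ maps $B$ into $W$, i.e.\ that the third rows of $P_1,P_2,P_3$ vanish.

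\textbf{Step 1: solve (B1$_{\mathrm{op}}$) and (C$_{\mathrm{op}}$).} The proof of Lemma~\ref{nlemma} cannot be used directly, because its symmetrization step cancels an invertible $A$. Here it only yields that $A\bigl(Q(u)^Tv+Q(v)^Tu-\tau(u)v-\tau(v)u\bigr)=0$, i.e.\ only the $W$-components of the alternation identity. I would therefore work as in Case (II.1), with a block decomposition adapted to $B=W\oplus\mathbb Re_3$. Write
\[
P(u)=\begin{pmatrix}X(u)&p(u)\\ q(u)^T&r(u)\end{pmatrix}.
\]
Expand (B1$_{\mathrm{op}}$) for $\alpha,\beta$ running through $W\times W$, $W\times e_3$, $e_3\times W$ and $e_3\times e_3$, using that $A\alpha\times A\beta$ is a multiple of $e_3$ for $\alpha,\beta\in W$ and that $Ae_3=0$. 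The resulting linear system in the 27 entries, together with $P_1e_1+P_2e_2+P_3e_3=0$, should yield exactly the displayed family with parameters $u,v,\alpha,\beta,\gamma,\delta$. In that family the third rows of $P_1,P_2$ are $(0,0,u/3)$ and $(0,0,v/3)$, and the third row of $P_3$ is $0$. I would cross-check the result by confirming that the Lie-type candidate $[[x,y],z]$, restricted to this degenerate $A$, lies in the family.

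\textbf{Step 2: apply (B2$_{ij}$).} Take $(i,j)=(3,1)$ and $(3,2)$ and read off the $(1,3)$- and $(2,3)$-entries. Since $P_3$ has only the column $(u,v,0)^T$ in position three, the commutator $[P_3,P_1]$ and the right-hand side $\operatorname{tr}(P_3)P_1-\sum_k(P_3)_{1k}P_k=-uP_3$ differ by terms producing $2u^2=0$, and similarly $2v^2=0$. Hence $u=v=0$. Every $P_i$ then has zero third row, so $P(B)B\subseteq W$, and $W$ is a nonzero proper Bol ideal. Therefore $B$ is not simple. The sign $\varepsilon$ plays no role in this argument.

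\textbf{Main obstacle.} I expect the main difficulty to be Step 1, since the homogeneous reduction of Lemma~\ref{nlemma} is unavailable and the $e_3$-components of the system must be handled by hand. What matters for the conclusion, however, is only the shape of the third rows. If a full solution of the system is too cumbersome, I would derive just enough of it to show two things: the $(3,1)$ and $(3,2)$ entries of all $P_i$ vanish, and the $(3,3)$ entries are proportional to the $(1,3)$ and $(2,3)$ entries of $P_3$. Step 2 then goes through unchanged.
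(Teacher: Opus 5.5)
Your proposal is correct and follows the paper's own argument: the same candidate ideal $W=\operatorname{im}A$, the same six-parameter solution of (B1$_{\mathrm{op}}$) and (C$_{\mathrm{op}}$) for both signs of $\varepsilon$ (here $A(A\alpha\times A\beta)=0$, so that system is homogeneous), the same $(1,3)$- and $(2,3)$-entries of (B2$_{31}$), (B2$_{32}$) giving $2u^2=2v^2=0$, and then $W\times B=B$ with Lemma~\ref{bolideal}; your remark that the statement only makes sense inside the rank-two subsection is also well taken. The one weak spot is the fallback in your last paragraph: the third rows alone do not suffice for Step 2, because the $(1,3)$-entry of (B2$_{31}$) also involves $(P_1)_{11}$, $(P_1)_{12}$, $\operatorname{tr}P_3$ and the first two columns of $P_3$, so you do need the full solution of Step 1.
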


\subsubsection{The case $A_{N_3}$}

Let
\[
A=A_{N_3}=
\begin{pmatrix}
0&0&-1\\
1&0&0\\
0&0&0
\end{pmatrix}.
\]
Putting
$L=\mathbb Re_1$,
the binary products give
$[L,B]\subseteq L$.

In this case the second Bol identity is not needed.  Indeed,
\rm{(B1$_{\mathrm{op}}$)} together with cyclicity already gives
$
P_2(B)\subseteq L$, and 
$P_3(B)\subseteq L$. 
For completeness, the corresponding solution is
\[
P_1=
\begin{pmatrix}
1-r+s&p&0\\
0&r-1&1-r+2s\\
0&q&1-r
\end{pmatrix},
\; P_2=
\begin{pmatrix}
0&r-2s-1&r-2s-1\\
0&0&0\\
0&0&0
\end{pmatrix},
\;
P_3=
\begin{pmatrix}
0&r&s\\
0&0&0\\
0&0&0
\end{pmatrix}.
\]
Since
$L\times B=\operatorname{Span}\{e_2,e_3\}$,
we obtain $
P(L\times B)B\subseteq L$. 
Hence $L$ is a proper Bol ideal (see Lemma \ref{bolideal}). Again, we can state the following:

\begin{prop}\label{prop:rank2-N3}
No simple three-dimensional Bol algebra has binary matrix $A_{N_3}$.
\end{prop}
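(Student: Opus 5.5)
The plan is to exhibit a nonzero proper Bol ideal by hand and then invoke Lemma~\ref{bolideal}; the second Bol identity should not be needed. First I would read off the binary product from $A=A_{N_3}$. The columns of $A$ give $Ae_1=e_2$, $Ae_2=0$ and $Ae_3=-e_1$. Hence
\[
[e_1,e_2]=-e_1,\qquad [e_2,e_3]=e_2,\qquad [e_3,e_1]=0 .
\]
With $L=\mathbb Re_1$ this gives $[L,B]\subseteq L$ immediately. Moreover $L\times B=\operatorname{Span}\{e_1\times e_2,\,e_1\times e_3\}=\operatorname{Span}\{e_3,e_2\}$. So by linearity of $P$ the second condition of Lemma~\ref{bolideal}, $P(L\times B)B\subseteq L$, is equivalent to
\[
P_2(B)\subseteq L,\qquad P_3(B)\subseteq L,
\]
that is, the second and third rows of $P_2$ and of $P_3$ vanish. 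Once this is shown, $L$ is a nonzero proper Bol ideal, and no algebra with binary matrix $A_{N_3}$ is simple.

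The substance is therefore to derive these twelve vanishing entries from (B1$_{\mathrm{op}}$) together with (C$_{\mathrm{op}}$). Lemma~\ref{nlemma} assumes $A$ invertible, so I would not use it. Instead I would work directly with (B1$_{\mathrm{op}}$), exploiting $\ker A=\mathbb Re_2$.

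I would begin with two evaluations. Taking $\alpha=e_2$ kills both terms on the right-hand side, since $Ae_2=0$. This gives the homogeneous matrix identity
\[
P_2A-\operatorname{tr}(P_2)A+AP_2^T=0 .
\]
Taking $\beta=e_2$ gives, for every $\alpha$,
\[
AP(\alpha)^Te_2=P_2A\alpha .
\]
Since $\operatorname{im}A=\operatorname{Span}\{e_1,e_2\}$, the third row of each of these matrix identities forces corresponding rows or columns of $P_2$ to vanish. In particular the third row of $P_2A$ must vanish, which kills $(P_2)_{31}$ and $(P_2)_{33}$, and so on.

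Next I would evaluate (B1$_{\mathrm{op}}$) on the remaining pairs $(\alpha,\beta)\in\{e_1,e_3\}^2$ and impose (C$_{\mathrm{op}}$), i.e.\ $P_1e_1+P_2e_2+P_3e_3=0$. This pins down the second and third rows of $P_2$ and $P_3$. The third component of each equation is especially cheap, because the term $A(A\alpha\times A\beta)$ and every term of the form $A(\cdot)$ have zero third coordinate. The expected outcome is the solution family displayed before the statement, in which only first rows survive in $P_2$ and $P_3$. I would cross-check this against the explicit parametrization in $p,q,r,s$.

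The main obstacle is the bookkeeping in this last step. One must make sure the second rows of $P_2$ and $P_3$ really vanish and are not merely constrained. I expect the second-row entries to be coupled to entries of $P_1$ through the inhomogeneous term and through (C$_{\mathrm{op}}$), so I would first solve all third-component equations, then substitute into the second-component ones. If some second-row entry refused to vanish, I would fall back on (B2$_{ij}$) for the pairs $(i,j)=(2,3),(3,2)$ to finish.
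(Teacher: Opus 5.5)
Your proposal is correct and follows the paper's own argument. The paper also takes $L=\mathbb Re_1$, for which $[L,B]\subseteq L$. It uses (B1$_{\mathrm{op}}$) and (C$_{\mathrm{op}}$) alone, with no need for (B2), to force $P_2(B),P_3(B)\subseteq L$, and concludes by Lemma~\ref{bolideal}.

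There is one bookkeeping slip. The third row of $P_2A$ is $\bigl((P_2)_{32},0,-(P_2)_{31}\bigr)$, so it kills $(P_2)_{31}$ and $(P_2)_{32}$, not $(P_2)_{33}$; the entry $(P_2)_{33}=0$ comes instead from the $(2,1)$ entry of $P_2A-\operatorname{tr}(P_2)A+AP_2^T=0$. Your expectation about the coupling is right: (B1$_{\mathrm{op}}$) leaves $(P_3)_{23}=(P_2)_{11}$ and $(P_3)_{32}=-(P_3)_{22}=(P_3)_{11}$ undetermined, and the second and third components of (C$_{\mathrm{op}}$) kill them, so the fallback to (B2$_{ij}$) is never needed.
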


\subsubsection{The case $A_{N_2}(a,b)$}

Let
\[
A=A_{N_2}(a,b)
=
\begin{pmatrix}
a&-1&0\\
1&b&0\\
0&0&0
\end{pmatrix},
\qquad ab+1\neq0,
\]
and set
$W=\operatorname{Im}A
=
\operatorname{Span}\{e_1,e_2\}$.
Then
$[W,B]\subseteq W$.

The computation is best organized separating the case where $9ab+1$ is zero.

\paragraph{The subcase $9ab+1\neq0$.}

Solving \rm{(B1$_{\mathrm{op}}$)} and cyclicity shows that there exist $u,v\in
\mathbb R$ such that
$(P_i)_{31}=(P_i)_{32}=0$, for $i=1,2,3$,
and
\[
(P_1)_{33}
=
\frac{(3ab-1)u+4av}{9ab+1},
\qquad
(P_2)_{33}
=
\frac{-4bu+(3ab-1)v}{9ab+1},
\qquad
(P_3)_{33}=0,
\]
while
$
P_3(B)\subseteq W$. 

Assume first that $ab\neq0$.  Among the consequences of the second
Bol identity are
\[
av^2+bu^2=0,
\qquad
a\,u(3bu+v)=0,
\qquad
b\,v(u-3av)=0.
\]
If $u=0$, the first equation gives $v=0$, and similarly $v=0$
implies $u=0$.  Hence, if $(u,v)\neq(0,0)$, then
\[
v=-3bu,
\qquad
u=3av.
\]
Substitution yields
\[
(9ab+1)u=(9ab+1)v=0,
\]
contrary to $9ab+1\neq0$.  Therefore
$u=v=0$.
It follows that $ 
P_i(B)\subseteq W$ for $i=1,2,3$.
Since
$W\times B=B$,
the plane $W$ is a proper Bol ideal.

\medskip It remains, within the subcase $9ab+1\ne0$, to take $ab=0$.

Suppose $a=0$ and $b\neq0$.  The second Bol identity gives $u=0$.
If $v=0$, the preceding argument again makes $W$ a Bol ideal.  If
$v\neq0$, the remaining equations reduce the relevant operators to
\[
P_1=
\begin{pmatrix}
0&0&0\\
-v&0&\lambda\\
0&0&0
\end{pmatrix},
\qquad
P_2=
\begin{pmatrix}
v&0&-\lambda\\
0&0&0\\
0&0&-v
\end{pmatrix},
\qquad
P_3=
\begin{pmatrix}
0&0&0\\
0&0&v\\
0&0&0
\end{pmatrix}.
\]
Then
$L=\mathbb Re_2$
is a proper Bol ideal.  Indeed, $ 
[L,B]\subseteq L$, 
and, since
$L\times B=\operatorname{Span}\{e_1,e_3\}$,
the matrices $P_1$ and $P_3$ give
$
P(L\times B)B\subseteq L$.

Suppose now $b=0$ and $a\neq0$.  The second Bol identity gives
$v=0$.
If $u=0$, the plane $W$ is an ideal.  If $u\neq0$, the remaining
operators are
\[
P_1=
\begin{pmatrix}
0&0&0\\
0&u&\lambda\\
0&0&-u
\end{pmatrix},
\qquad
P_2=
\begin{pmatrix}
0&-u&-\lambda\\
0&0&0\\
0&0&0
\end{pmatrix},
\qquad
P_3=
\begin{pmatrix}
0&0&u\\
0&0&0\\
0&0&0
\end{pmatrix}.
\]
Hence
$L=\mathbb Re_1$
is a proper Bol ideal.

Finally, let
$a=b=0$.
If $u=v=0$, the plane $W$ is again a Bol ideal.  

Suppose then
$(u,v)\neq(0,0)$.
The remaining equations give
\[
P_1=
\begin{pmatrix}
0&0&0\\
-v&u&\lambda\\
0&0&-u
\end{pmatrix},
\qquad
P_2=
\begin{pmatrix}
v&-u&-\lambda\\
0&0&0\\
0&0&-v
\end{pmatrix},
\qquad
P_3=
\begin{pmatrix}
0&0&u\\
0&0&v\\
0&0&0
\end{pmatrix}.
\]
Put
$L=\mathbb R(ue_1+ve_2)$.
One checks directly that
$[L,B]\subseteq L$.
Moreover,
\[
(ue_1+ve_2)\times B
=
\operatorname{Span}\{e_3,\,ve_1-ue_2\}.
\]
We have
$P_3(B)\subseteq L$,
and
\[
P(ve_1-ue_2)
=
vP_1-uP_2
=
\begin{pmatrix}
-uv&u^2&\lambda u\\
-v^2&uv&\lambda v\\
0&0&0
\end{pmatrix},
\]
whose image is contained in $L$.  Hence $L$ is a proper Bol ideal.

\paragraph{The subcase $9ab+1=0$.}

Assume now $9ab+1=0$,
then $a,b\neq0$, and 
$b=-\frac1{9a}$.

If we put
$w=-3ae_1+e_2$,
then
\[
[w,e_1]=[w,e_2]=0,
\qquad
[w,e_3]=\frac23 w.
\]
Thus
$L=\mathbb R w$
is a one-dimensional ideal of the underlying anticommutative algebra.

Solving \rm{(B1$_{\mathrm{op}}$)} and cyclicity on the exceptional locus gives
parameters $q,v$ for which
\[
P_3=
\begin{pmatrix}
0&0&3av\\
0&0&v\\
0&0&0
\end{pmatrix}.
\]
Two consequences of \rm{(B2$_{ij}$)} are
$ 
av(3q+v)=0$ and 
$a^2v(3q-v)=0$. 
Since $a\neq0$, these imply
$v=0$.

If $q=0$, then
$W=\operatorname{Span}\{e_1,e_2\}$
is a proper Bol ideal.  Suppose therefore that $q\neq0$.
Furthermore,
\[
w\times e_1=-e_3,
\qquad
w\times e_2=-3ae_3,
\qquad
w\times e_3=e_1+3ae_2.
\]
Since $P_3=0$, it remains only to consider
\[
P(e_1+3ae_2)=P_1+3aP_2.
\]
Writing the remaining free coefficients as
$c_1,c_2,c_3,c_4$, the nontrivial equations from the second Bol
identity reduce to
\[
3ac_2+12ac_3+5c_1=0,
\]
\[
15ac_4+4c_2+c_3=0.
\]
Moreover,
\[
P_1+3aP_2=
\begin{pmatrix}
-\dfrac{3aq}{2}&-\dfrac{9a^2q}{2}&c_1+3ac_3\\[2mm]
\dfrac q2&\dfrac{3aq}{2}&c_2+3ac_4\\[2mm]
0&0&0
\end{pmatrix}.
\]
The two displayed relations imply
$ 
c_1+3ac_3
=
-3a(c_2+3ac_4)$.
Hence every column of $P_1+3aP_2$ is a multiple of
$-3ae_1+e_2$.
Therefore
$P(L\times B)B\subseteq L$,
and $L$ is a proper Bol ideal. Summarizing, we have proved the following:

\begin{prop}\label{prop:rank2-N2}
No simple three-dimensional Bol algebra has binary matrix
$A_{N_2}(a,b)$.
\end{prop}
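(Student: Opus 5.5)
The statement to prove is Proposition~\ref{prop:rank2-N2}: for every admissible pair $(a,b)$ with $ab+1\neq0$, a Bol algebra with binary matrix $A_{N_2}(a,b)$ has a proper nonzero Bol ideal. The argument is the case analysis carried out just above, organised as follows.

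\textbf{Reduction.} Throughout, the tool is Lemma~\ref{bolideal}: a subspace $I$ is a Bol ideal as soon as
\[
[I,B]\subseteq I \quad\text{and}\quad P(I\times B)B\subseteq I .
\]
The candidate ideals are the plane $W=\operatorname{Im}A=\operatorname{Span}\{e_1,e_2\}$ and suitable lines $L\subset W$. We always have $[W,B]\subseteq W$. Since $W\times B=B$, the plane $W$ is an ideal exactly when every $P_i$ has image in $W$, that is, when the third rows of $P_1,P_2,P_3$ vanish.

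\textbf{Step one.} Solve the affine linear system (B1$_{\mathrm{op}}$), (C$_{\mathrm{op}}$) for $A_{N_2}(a,b)$. Lemma~\ref{nlemma} cannot be used directly because $A$ is singular, so I would use the block decomposition $R(u)$ from Case (II.1) on $W\oplus\mathbb Re_3$, or solve the $27$ linear equations directly. The goal is to show that the third rows are governed by two parameters $u,v$, namely $(P_i)_{31}=(P_i)_{32}=0$ and $(P_3)_{33}=0$, and that $P_3(B)\subseteq W$. In solving for $(P_1)_{33}$ and $(P_2)_{33}$ one must divide by the determinant $9ab+1$. This forces a split into the generic case $9ab+1\neq0$ and the exceptional locus $9ab+1=0$.

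\textbf{Step two: the generic case $9ab+1\neq0$.} Here I would extract a few entries of (B2$_{ij}$) that are quadratic in $u,v$, namely $av^2+bu^2=0$, $au(3bu+v)=0$ and $bv(u-3av)=0$.
\begin{itemize}
\item If $ab\neq0$, these equations force either $u=v=0$, or $v=-3bu$ and $u=3av$. The latter gives $(9ab+1)u=0$, so $u=v=0$ in all cases, and $W$ is an ideal.
\item If $ab=0$, one of $u,v$ vanishes. Either both vanish and $W$ is again an ideal, or the remaining (B2) equations pin down the $P_i$ explicitly. In that situation a line $L$ works: $L=\mathbb Re_2$, $L=\mathbb Re_1$, or $L=\mathbb R(ue_1+ve_2)$ when $a=b=0$. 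For each, verify $[L,B]\subseteq L$ and that $P$ evaluated on $L\times B$ has image in $L$.
\end{itemize}

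\textbf{Step three: the exceptional locus $b=-1/(9a)$.} Here the binary algebra itself has a one-dimensional ideal $\mathbb Rw$ with $w=-3ae_1+e_2$, and I expect this step to be the main obstacle. Using (B2$_{ij}$), first show $v=0$, so $P_3=0$. If the remaining parameter satisfies $q=0$, then $W$ is an ideal. Otherwise, compute $w\times B=\operatorname{Span}\{e_3,\,e_1+3ae_2\}$. It then suffices to show that every column of $P_1+3aP_2$ is a multiple of $w$. This is where the bookkeeping is heaviest: the first two columns are already proportional to $w$, while the third column requires combining the two linear relations among the free coefficients $c_1,\dots,c_4$ that (B2) supplies, namely
\[
3ac_2+12ac_3+5c_1=0,\qquad 15ac_4+4c_2+c_3=0,
\]
to obtain $c_1+3ac_3=-3a(c_2+3ac_4)$. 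I would verify this last identity, and the list of (B2) consequences used above, symbolically.
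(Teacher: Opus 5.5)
\textbf{There is a genuine gap, and the statement itself is false.} Your outline follows the paper's proof step for step, and it inherits a false claim in Step one. You assert that (B1$_{\mathrm{op}}$) and (C$_{\mathrm{op}}$) force $(P_i)_{31}=(P_i)_{32}=0$ for all $i$. This holds when $ab\neq0$, but not when $ab=0$.

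Let $C$ be the upper-left $2\times2$ block of $A$, and let $X=\bigl(\begin{smallmatrix} m&n\\ z&-m\end{smallmatrix}\bigr)$ be the upper-left block of $P_3$; it is traceless, by cyclicity. Since $Ae_3=0$, the term $A(A\alpha\times A\beta)$ vanishes, so (B1$_{\mathrm{op}}$) is homogeneous. Taking $\beta=e_3$ shows that the first two entries of the third row of $P_i$ ($i=1,2$) are the entries of $C^{-1}XCe_i$. The remaining conditions on $X$ come from $\alpha=e_3$ and from the third component with $\alpha,\beta\in W$. They reduce to $am=bm=0$ and $az=bn=0$. So $X$, and with it the entries you need to vanish, is forced to be zero only when $ab\neq0$. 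For $a=b=0$, every traceless $X$ survives.

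Two smaller slips: the substitution $R(u)=P(A^{-1}u)$ is unavailable here because $A$ is singular. Also, for $a=b=0$ the quadratic (B2) relations do not force one of $u,v$ to vanish.

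This extra freedom yields a simple algebra. Take $A=A_{N_2}(0,0)$, so that $[e_1,e_3]=e_1$, $[e_2,e_3]=e_2$, $[e_1,e_2]=0$, and put $P_1=-E_{31}$, $P_2=E_{32}$, $P_3=E_{11}-E_{22}$.
\begin{itemize}
\item All traces vanish, and $P_1e_1+P_2e_2+P_3e_3=-e_3+e_3=0$, so (C$_{\mathrm{op}}$) holds.
\item Both sides of (B1$_{\mathrm{op}}$) equal $(-\alpha_2\beta_3,\,-\alpha_1\beta_3,\,\alpha_1\beta_2+\alpha_2\beta_1)^T$.
\item The nine relations (B2$_{ij}$) reduce to $[P_1,P_2]=0$, $[P_3,P_1]=-P_1$ and $[P_3,P_2]=P_2$, all of which hold.
\end{itemize}
Here $u=v=0$, yet $W$ is not an ideal, because $\langle e_2,e_3,e_1\rangle=-e_3$.

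Every proper ideal of the binary algebra lies in $W$: a vector with nonzero $e_3$-component brackets onto $e_1$ and $e_2$. For $0\neq w=xe_1+ye_2$ one gets $\langle w,e_3,e_1\rangle=-ye_3$ and $\langle w,e_3,e_2\rangle=-xe_3$, so no line in $W$ is a Bol ideal either. Hence this Bol algebra is simple, and the statement fails for $(a,b)=(0,0)$. Neither your outline nor the paper's argument, which makes the same Step one claim, can be completed. The subcases with $ab=0$ have to be redone on the full solution space of (B1) and (C).
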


\subsubsection{The case $A_{N_1}$}

It remains to consider
\[
A=A_{N_1}
=
\begin{pmatrix}
0&1&0\\
0&0&0\\
0&0&1
\end{pmatrix}.
\]
The binary products are therefore
\[
[e_1,e_2]=e_3,
\qquad
[e_2,e_3]=0,
\qquad
[e_3,e_1]=e_1,
\]
so that
$[B,B]=\operatorname{Span}\{e_1,e_3\}$.

We first determine the possible proper ideals of the underlying
anticommutative algebra.

\begin{lem}\label{lem:AN1-binary-ideals}
The underlying anticommutative algebra has no one-dimensional ideals.
\end{lem}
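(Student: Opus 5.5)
The plan is a direct computation: a one-dimensional ideal is a line $L=\mathbb R w$ with $[w,B]\subseteq L$, so I will show that the three products of a general vector with the basis vectors force that vector to vanish. Write
\[
w=ae_1+be_2+ce_3\neq0 .
\]
I will compute $[w,e_1]$, $[w,e_2]$, $[w,e_3]$ from the table
\[
[e_1,e_2]=e_3,\qquad [e_2,e_3]=0,\qquad [e_3,e_1]=e_1,
\]
using anticommutativity. This gives
\[
[w,e_1]=ce_1-be_3,\qquad [w,e_2]=ae_3,\qquad [w,e_3]=-ae_1 .
\]

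The conditions are then imposed in the order that makes each step immediate.

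First, if $a\neq0$, then $[w,e_3]=-ae_1$ and $[w,e_2]=ae_3$ would both be nonzero multiples of $w$. This is impossible, since $e_1$ and $e_3$ are linearly independent. Alternatively, $ae_3\in\mathbb R w$ forces $w\in\mathbb R e_3$, contradicting $a\neq0$. Hence $a=0$.

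Second, with $a=0$ we have $w=be_2+ce_3$, while $[w,e_1]=ce_1-be_3$ must lie in $\mathbb R w$. The element $w$ has no $e_1$-component, so $c=0$. Then $w=be_2$ and $-be_3\in\mathbb R e_2$, which forces $b=0$.

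Therefore $w=0$, contradicting $w\neq0$, so no one-dimensional ideal exists. There is no real obstacle here; the only point needing care is getting the signs in the products right, for instance $[e_1,e_3]=-e_1$ and $[e_3,e_2]=0$. Those signs do not affect the vanishing argument, which only uses which basis vectors occur in each product.
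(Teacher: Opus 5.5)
Your proof is correct and follows the paper's argument: it computes the same three products $[w,e_1]=ce_1-be_3$, $[w,e_2]=ae_3$, $[w,e_3]=-ae_1$ and requires them to lie in $\mathbb R w$. You also spell out the elimination ($a=0$, then $c=0$, then $b=0$) that the paper leaves implicit.
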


\begin{proof}
For
$v=xe_1+ye_2+ze_3$,
one has
\[
[v,e_1]=ze_1-ye_3,
\qquad
[v,e_2]=xe_3,
\qquad
[v,e_3]=-xe_1.
\]
If $\mathbb Rv$ were an ideal, these three vectors would all be
proportional to $v$,  forcing
$x=y=z=0$.
\end{proof}

\begin{co}\label{cor:AN1-only-ideal}
The only possible nonzero Bol ideal, if one exists, is
\[
I=[B,B]=\operatorname{Span}\{e_1,e_3\}.
\]
\end{co}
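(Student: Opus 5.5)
The corollary concerns Bol ideals of the algebra with binary matrix $A_{N_1}$. My plan is to combine Lemma~\ref{lem:AN1-binary-ideals} with Lemma~\ref{bolideal}, which says that every Bol ideal $I$ satisfies $[I,B]\subseteq I$ and so is an ideal of the underlying anticommutative algebra. A nonzero proper Bol ideal therefore has dimension $1$ or $2$. By Lemma~\ref{lem:AN1-binary-ideals} dimension $1$ cannot occur, so the whole task is to show that the only two-dimensional binary ideal is $\operatorname{Span}\{e_1,e_3\}$.

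For the two-dimensional case I would first describe $I$ as a kernel. Write $I=\ker\varphi$ for a nonzero linear form $\varphi(v)=\varphi_1x+\varphi_2y+\varphi_3z$, where $v=xe_1+ye_2+ze_3$. The ideal condition $[I,B]\subseteq I$ becomes $\varphi([v,e_j])=0$ for all $v\in I$ and $j=1,2,3$. Using the formulas from the proof of Lemma~\ref{lem:AN1-binary-ideals}, this means that each of the three linear forms
\[
v\mapsto \varphi_1 z-\varphi_3 y,\qquad v\mapsto \varphi_3 x,\qquad v\mapsto -\varphi_1 x
\]
vanishes on $\ker\varphi$. A linear form vanishes on $\ker\varphi$ exactly when it is a scalar multiple of $\varphi$, so each of the three forms must be proportional to $\varphi$.

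Next I would work through these proportionality conditions. The forms $\varphi_3 x$ and $-\varphi_1x$ are multiples of $x$. Suppose $\varphi_1\neq0$. Then $x$ itself must be proportional to $\varphi$, so $\varphi_2=\varphi_3=0$. But then the first form is $\varphi_1 z$, which is not proportional to $\varphi=\varphi_1x$, a contradiction. Hence $\varphi_1=0$. The same argument with $\varphi_3 x$ then forces $\varphi_3x$ to be proportional to $\varphi$, which is only possible if $\varphi_3=0$. So $\varphi=\varphi_2 y$ and $I=\ker\varphi=\operatorname{Span}\{e_1,e_3\}$.

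Finally I would check that this subspace is indeed a binary ideal. It is, since it equals $[B,B]$ and $[I,B]\subseteq[B,B]=I$. Note that the corollary only claims this is the \emph{only possible} nonzero proper Bol ideal; whether it actually is one depends on $P$ and is handled later in the paper. The only step needing care is the case analysis on the coefficients of $\varphi$, and it is elementary.
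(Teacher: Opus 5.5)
Your argument is correct, but it takes a coordinate route where the paper uses a one-line structural observation. Like you, the paper first excludes dimension one via Lemma~\ref{lem:AN1-binary-ideals}. For a two-dimensional ideal $I$, it then notes that $B/I$ is one-dimensional, so its binary product vanishes. Hence $[B,B]\subseteq I$, and $\dim[B,B]=2$ forces $I=[B,B]=\operatorname{Span}\{e_1,e_3\}$.

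Your case analysis of the forms $v\mapsto\varphi([v,e_j])$ ends with $\varphi_1=\varphi_3=0$, i.e.\ with $\varphi$ vanishing on $[B,B]$. The quotient argument gives exactly this for free. For any codimension-one ideal $I$ of an anticommutative algebra, write $B=I\oplus\mathbb Ru$; anticommutativity then gives $[B,B]\subseteq[I,B]+\mathbb R[u,u]=[I,B]\subseteq I$.

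The paper's route is basis-free and uses only $\dim[B,B]=2$. It therefore applies verbatim to any three-dimensional algebra with two-dimensional derived space and no one-dimensional ideals. Yours depends on the explicit multiplication table of $A_{N_1}$. In exchange it is fully self-contained, and it explicitly confirms that $\operatorname{Span}\{e_1,e_3\}$ is a binary ideal, which, as you rightly note, the corollary does not require.
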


\begin{proof}
By Lemma~\ref{lem:AN1-binary-ideals}, such an ideal must have
dimension two.  The quotient is then one-dimensional, hence has zero
binary product.  Therefore
$[B,B]\subseteq I$,
and equality follows from
$\dim[B,B]=2$.
\end{proof}

Thus simplicity is reduced to deciding whether this single plane $I$
is a Bol ideal.
Solving \rm{(B1$_{\mathrm{op}}$)} together with cyclicity gives
\[
P_1=0,
\qquad
P_2=
\begin{pmatrix}
-\dfrac{2t}{3}&0&0\\[1mm]
\alpha&\dfrac t3&0\\[1mm]
\beta&1&\dfrac{2t}{3}
\end{pmatrix},
\qquad
P_3=
\begin{pmatrix}
2&0&0\\[1mm]
0&-2&-\dfrac t3\\[1mm]
t&0&-1
\end{pmatrix}.
\]
The second Bol identity is equivalent to
\[
15\alpha+\beta t=0,
\qquad
12\beta+5t^2=0.
\]
Consequently
$\alpha=\frac{t^3}{36}$, 
$\beta=-\frac{5t^2}{12}$.
Thus every compatible ternary product is given by
\[
P_1(t)=0,
\qquad
P_2(t)=
\begin{pmatrix}
-\dfrac{2t}{3}&0&0\\[3mm]
\dfrac{t^3}{36}&\dfrac t3&0\\[3mm]
-\dfrac{5t^2}{12}&1&\dfrac{2t}{3}
\end{pmatrix},
\qquad
P_3(t)=
\begin{pmatrix}
2&0&0\\[1mm]
0&-2&-\dfrac t3\\[1mm]
t&0&-1
\end{pmatrix},
\qquad t\in\mathbb R.
\]

Every algebra in this family is simple.  Indeed, by Corollary \ref{cor:AN1-only-ideal}, the only possible
proper Bol ideal is
$I=\operatorname{Span}\{e_1,e_3\}$.

Since
$e_3\times e_1=e_2$ and $e_1\times e_2=e_3$,
we have both $e_2,e_3\in I\times B$. If $t\neq0$, then
$$
P_2(t)e_2=\frac{t}{3}e_2+e_3\notin I,$$
whereas, if $t=0$, then
$$
P_3(0)e_2=-2e_2\notin I,$$
so in either case 
$P(I\times B)B\nsubseteq I$. 


In either case $I$ is not a Bol ideal, and every Bol algebra in the family is simple.

\medskip
It remains to determine the parameter $t$ up to isomorphism.
The right radical of $A_{N_1}$ is $\mathbb Re_1$, whereas its left
radical is $\mathbb Re_2$.  Hence, if
\[
gA_{N_1}g^T=(\det g)A_{N_1},
\]
then the corresponding coordinate lines are preserved.  Thus $g$ has
the form
\[
g=
\begin{pmatrix}
x&0&0\\
0&y&0\\
p&q&r
\end{pmatrix}.
\]
Substitution into the stabilizer equation gives $
p=q=0$, $r=1$, and $xy=1$. 
Therefore
\[
\operatorname{Stab}(A_{N_1})
=
\left\{
\operatorname{diag}(s,s^{-1},1):
s\in\mathbb R \backslash \{0\}
\right\}.
\]
For
$g=\operatorname{diag}(s,s^{-1},1)$
the transformation law
\[
P'(u)
=
gP\bigl((\det g)^{-1}g^Tu\bigr)g^{-1}
\]
gives
$t\longmapsto t/s$.
Hence all nonzero values of $t$ are equivalent to $t=1$, whereas
$t=0$ is a separate orbit.

Summarizing, we conclude:

\begin{theor}\label{thm:rank-two}
Let $B$ be a simple three-dimensional real Bol algebra with
$\dim[B,B]=2$.
Then, up to isomorphism, its binary product is represented by
\[
A=
\begin{pmatrix}
0&1&0\\
0&0&0\\
0&0&1
\end{pmatrix},
\]
and its ternary product is represented by
\[
P_1=0,
\qquad
P_2=
\begin{pmatrix}
-\dfrac{2t}{3}&0&0\\[3mm]
\dfrac{t^3}{36}&\dfrac t3&0\\[3mm]
-\dfrac{5t^2}{12}&1&\dfrac{2t}{3}
\end{pmatrix},
\qquad
P_3=
\begin{pmatrix}
2&0&0\\[1mm]
0&-2&-\dfrac t3\\[1mm]
t&0&-1
\end{pmatrix},
\]
where
$t\in\{0,1\}$.

Conversely, both of these Bol algebras are simple.
\end{theor}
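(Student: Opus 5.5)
The proof assembles the rank-two analysis of this subsection. Since $[B,B]=\operatorname{im}A$, the hypothesis $\dim[B,B]=2$ means $\operatorname{rank}A=2$. By the extension of Lemma~\ref{lem:twisted-congruence} to singular matrices together with the Lee--Weinberg theorem, $A$ is equivalent under the twisted action \eqref{eq:action} to one of $A_{S,\pm}$, $A_{N_1}$, $A_{N_2}(a,b)$, $A_{N_3}$. A change of basis transforms $P$ by the law $P'(u)=gP((\det g)^{-1}g^Tu)g^{-1}$ and preserves simplicity. We may therefore assume that $A$ is literally one of these five representatives.

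The first step is elimination. Propositions~\ref{prop:rank2-symmetric}, \ref{prop:rank2-N3} and \ref{prop:rank2-N2} show that in each of the cases $A_{S,\pm}$, $A_{N_3}$ and $A_{N_2}(a,b)$, every ternary product satisfying (B1$_{\mathrm{op}}$), (C$_{\mathrm{op}}$) and (B2$_{ij}$) admits a proper Bol ideal. This ideal is $W$ or a line $L$, verified through Lemma~\ref{bolideal}. Hence a simple $B$ forces $A=A_{N_1}$.

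For $A=A_{N_1}$, I would take the computed general solution of (B1$_{\mathrm{op}}$) and (C$_{\mathrm{op}}$). The two conditions $15\alpha+\beta t=0$ and $12\beta+5t^2=0$ coming from (B2) then give $\alpha=t^3/36$ and $\beta=-5t^2/12$, which yields the one-parameter family $P(t)$. Corollary~\ref{cor:AN1-only-ideal} reduces simplicity to checking that $I=\operatorname{Span}\{e_1,e_3\}$ is not a Bol ideal. This holds because $P_2(t)e_2\notin I$ when $t\neq0$, and $P_3(0)e_2\notin I$ when $t=0$. So every member of the family is simple, which proves the converse statement.

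The remaining step is the isomorphism reduction. Since the left and right radicals of $A_{N_1}$ are $\mathbb Re_2$ and $\mathbb Re_1$, the stabilizer is $\operatorname{Stab}(A_{N_1})=\{\operatorname{diag}(s,s^{-1},1)\}$, and it acts by $t\mapsto t/s$. Two algebras with the same binary matrix $A_{N_1}$ are isomorphic only via an element of this stabilizer. Thus $t\neq0$ normalizes to $t=1$, and $t=0$ is a separate orbit, fixed by the action; so $t=0$ and $t=1$ are non-isomorphic.

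The main obstacle is that the five families are complete but redundant. One must make sure that no algebra with $A$ in another family is actually isomorphic to an $A_{N_1}$ algebra, and that this does not spoil the orbit count. This is harmless: an isomorphism carries binary matrices to twisted-equivalent ones, so any algebra with binary matrix equivalent to $A_{N_1}$ can be moved to $A=A_{N_1}$ exactly. Once there, only the stabilizer remains, which justifies the final orbit argument.
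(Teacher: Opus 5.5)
Your proposal is correct and follows the paper's own argument essentially step by step. You reduce $\operatorname{rank}A=2$ to the five Lee--Weinberg families, discard $A_{S,\pm}$, $A_{N_3}$ and $A_{N_2}(a,b)$ by exhibiting proper Bol ideals via Lemma~\ref{bolideal}, and for $A_{N_1}$ use the (B2) constraints $\alpha=t^3/36$, $\beta=-5t^2/12$, the failure of $\operatorname{Span}\{e_1,e_3\}$ to be an ideal, and the stabilizer $\operatorname{diag}(s,s^{-1},1)$ acting by $t\mapsto t/s$. Your closing remark on the redundancy of the five families is a harmless clarification that the paper leaves implicit.

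Incidentally, $P_3(t)e_2=-2e_2\notin I$ for every $t$, so the case split $t\neq0$ versus $t=0$ in the simplicity check is not needed.
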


\subsection{The rank-one case} \label{sec:3.3}

We begin by reducing the binary product to normal form.

\begin{lem}\label{lem:rank-one-normal-forms}
Let \(A\in M_3(\mathbb R)\) have rank one. Up to the twisted action \eqref{eq:action} 
the matrix \(A\) is equivalent to exactly one of the two matrices
\[
 A_{\mathrm s}=
 \begin{pmatrix}
 1&0&0\\
 0&0&0\\
 0&0&0
 \end{pmatrix},
 \qquad
 A_{\mathrm n}=
 \begin{pmatrix}
 0&1&0\\
 0&0&0\\
 0&0&0
 \end{pmatrix}.
\]
\end{lem}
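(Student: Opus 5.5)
Since a rank-one matrix factors as $A=ab^T$ with $a,b\in\mathbb R^3$ nonzero, I would not invoke the Lee--Weinberg canonical form here and would instead argue directly. By the remark preceding the rank-two analysis, the argument of Lemma~\ref{lem:twisted-congruence} does not use invertibility. Hence $A'=(\det g)^{-1}gAg^T$ for some $g$ if and only if $A'$ is congruent to $A$ or to $-A$. It therefore suffices to classify rank-one matrices under ordinary congruence modulo sign. Under congruence, $kAk^T=(ka)(kb)^T$, so the pair of lines $(\mathbb Ra,\mathbb Rb)$ is moved by $k$, and the scalar splitting between $a$ and $b$ is free.

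The case split is according to whether $a$ and $b$ are proportional. If $b=\lambda a$ with $\lambda\neq0$, then $A=\lambda aa^T$ is symmetric. Choosing $k$ with $ka=|\lambda|^{-1/2}e_1$ gives $kAk^T=\operatorname{sign}(\lambda)\,e_1e_1^T=\pm A_{\mathrm s}$, and the sign is absorbed by the twisted action. If $a,b$ are linearly independent, I would choose $k\in\mathrm{GL}_3(\mathbb R)$ with $ka=e_1$ and $kb=e_2$. This is possible because $a,b$ extend to a basis. Then $kAk^T=e_1e_2^T=A_{\mathrm n}$. So every rank-one matrix is equivalent to $A_{\mathrm s}$ or to $A_{\mathrm n}$.

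For the \emph{exactly one} assertion, I would use an invariant preserved both by congruence and by multiplication by $-1$, namely symmetry of the matrix. Indeed, $(\pm kAk^T)^T=\pm kA^Tk^T$, so $A$ is symmetric if and only if every equivalent matrix is. Since $A_{\mathrm s}$ is symmetric and $A_{\mathrm n}$ is not, they lie in different orbits. Equivalently, one could compare the kernels of $A$ and $A^T$, which coincide exactly in the symmetric case.

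I do not expect a genuine obstacle. The only point needing care is the transfer from congruence modulo sign back to the twisted action for singular $A$. For this I would recheck explicitly that $g=(\det k)^{-1}k$ realizes any congruence $kAk^T$, and that $g=-I_3$ realizes $A\mapsto-A$; both computations are valid regardless of whether $A$ is invertible.
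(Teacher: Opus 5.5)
Your proposal is correct and follows essentially the same argument as the paper. Both factor $A=uv^T$, split on whether $u,v$ are proportional, normalize to $\pm E_{11}$ or $E_{12}$ by a suitable $k$, and separate the two orbits by symmetry, which is preserved by congruence and by multiplication by $-1$. Your explicit check that $g=(\det k)^{-1}k$ and $g=-I_3$ work for singular $A$ is a welcome detail that the paper simply asserts.
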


\begin{proof}
Since \(\operatorname{rank}A=1\), there exist nonzero vectors
\(u,v\in\mathbb R^3\) such that
$A=uv^T$.
Under ordinary congruence one has
\[
kAk^T=(ku)(kv)^T.
\]
As observed in Lemma~\ref{lem:twisted-congruence}, the twisted action
has the same orbits as ordinary congruence modulo multiplication by
\(-1\).
There are now two cases.

If \(u\) and \(v\) are linearly dependent, say \(v=\lambda u\) with
\(\lambda\neq0\), then
$A=\lambda uu^T$
is symmetric. Choosing \(k\in\operatorname{GL}_3(\mathbb R)\) such that
\(ku=|\lambda|^{-1/2}e_1\), we obtain
\[
kAk^T=\operatorname{sgn}(\lambda)E_{11}.
\]
Modulo the overall sign, this gives \(A_{\mathrm s}\).

If \(u\) and \(v\) are linearly independent, choose
\(k\in\operatorname{GL}_3(\mathbb R)\) such that
$ku=e_1$, and $kv=e_2$,
thus
\[
kAk^T=e_1e_2^T=E_{12}=A_{\mathrm n}.
\]

Finally, the two resulting matrices cannot be equivalent, since
\(A_{\mathrm s}\) is symmetric whereas \(A_{\mathrm n}\) is not, and
symmetry is preserved by congruence and by multiplication by \(-1\).
\end{proof}

\subsubsection{The symmetric representative}

Assume \(A=A_{\mathrm s}\), put
$W=\operatorname{Span}\{e_2,e_3\}$,
and let $\pi:B\to W$ be the projection along $\mathbb Re_1$.
A direct expansion of (B1$_{\mathrm{op}}$) and (C$_{\mathrm{op}}$) shows that, with
respect to the basis \((e_2,e_3)\) of \(W\), the \(W\)-components of the
three operators have the form
\begin{equation} \label{eq3.3.1.1}
\pi P_1=
\begin{pmatrix}
\dfrac{f-b}{2}&-z&x\\[1mm]
a+d&y&z
\end{pmatrix},
\qquad
\pi P_2=
\begin{pmatrix}
0&-\dfrac{b+f}{2}&e\\[1mm]
0&-a-2d&f
\end{pmatrix},
\qquad
\pi P_3=
\begin{pmatrix}
0&a&b\\
0&c&d
\end{pmatrix}
\end{equation} 
for suitable real parameters \(a,b,c,d,e,f,x,y,z\). Only these
components will be needed below.

Among the equations \rm{(B2$_{ij}$)} one finds
\begin{equation} \label{eq3.3.1.2}
c(a+d)=0,\qquad
cf=d(2a+d),\qquad
ce=b(2a+d), 
\end{equation} 
\begin{equation} \label{eq3.3.1.3}
2ad+bc+cf+4d^2=0,
\qquad
4a^2+2ad+bc+cf=0,
\end{equation} 
\begin{equation} \label{eq3.3.1.4}
b^2+bf-2de=0,
\qquad
bf+2de-3f^2=0,
\end{equation}
and
\begin{equation} \label{eq3.3.1.5}
-3ab+af-4bd+2df=0.
\end{equation}
Subtracting the two equations in \eqref{eq3.3.1.3} gives
$a^2=d^2$.

If \(a+d\neq0\), then necessarily \(a=d\neq0\). Equation \eqref{eq3.3.1.2}
then gives \(c=0\), and its second equation gives \(0=3d^2\), a
contradiction. Hence
\begin{equation} \label{eq3.3.1.6}
a=-d.
\end{equation}

If \(c=0\), the second equation in \eqref{eq3.3.1.2} gives \(d=0\), and hence
\(a=0\). Equations \eqref{eq3.3.1.4} then imply \(b=f=0\). Thus in this
case
\begin{equation} \label{eq3.3.1.7}
a=b=c=d=f=0,
\qquad e\in\mathbb R. \nonumber 
\end{equation} 
If \(e=0\), then \(\pi P_2=\pi P_3=0\) and \(\pi P_i e_1=0\) for
\(i=1,2,3\). Since \([e_1,B]=0\) and
\(e_1\times B=W\), Lemma~\ref{bolideal} shows that
\(\mathbb Re_1\) is a proper Bol ideal.

Assume therefore \(e\neq0\). The remaining equations
\rm{(B2$_{ij}$)} force
\begin{equation} \label{eq3.3.1.8}
y=z=0.
\end{equation}
Indeed, if \(\lambda=(P_3)_{13}\) and \(\mu=(P_2)_{12}\), the relevant
entries give
\[
ey=0,\qquad e\lambda=0,
\qquad e(-\lambda+3z-\mu)=0,
\qquad e(2z+\mu)=0,
\]
and \eqref{eq3.3.1.8} follows. By \eqref{eq3.3.1.1}, all \(P_i(B)\) are then
contained in
\[
I=\operatorname{Span}\{e_1,e_2\}.
\]
Since \([B,B]=\mathbb Re_1\subset I\), Lemma~\ref{bolideal}
shows that \(I\) is a proper Bol ideal.

It remains to consider \(c\neq0\). From \eqref{eq3.3.1.2}, and \eqref{eq3.3.1.4}-\eqref{eq3.3.1.6} one obtains
\begin{equation} \label{eq3.3.1.9}
b=f=-\frac{d^2}{c},
\qquad
e=\frac{d^3}{c^2}.
\end{equation} 
Indeed, if \(d=0\), equations \eqref{eq3.3.1.2} and \eqref{eq3.3.1.4} give
\(b=e=f=0\); if \(d\neq0\), equation \eqref{eq3.3.1.5} first gives
\(b=f\), and the formulas follow from \eqref{eq3.3.1.2}.

The remaining equations \rm{(B2$_{ij}$)} give
\begin{equation} \label{eq3.3.1.10}
dy=cz,
\qquad
cx+dz=0.
\end{equation}
For completeness, if
\[
\lambda=(P_3)_{12},\qquad
\mu=(P_3)_{13},\qquad
\nu=(P_2)_{12},
\]
the required entries are
\[
c\mu-2cz-d\lambda+2dy=0,
\qquad
c\nu+d\lambda=0,
\]
\[
c\mu+3cz+c\nu-3dy=0,
\]
\[
c^2x-cd\mu-cdz-cd\nu+2d^2y=0.
\]
The first three equations yield \(dy=cz\) and \(\mu+\nu=0\), and the
last one then gives \(cx+dz=0\).

Put
\[
w=-d e_2+c e_3.
\]
Using \eqref{eq3.3.1.9} and \eqref{eq3.3.1.10} in \eqref{eq3.3.1.1}, we see that
\[
\pi P_i(B)\subseteq\mathbb Rw,
\qquad i=1,2,3.
\]
Consequently
$$
P_i(B)\subseteq I:=\operatorname{Span}\{e_1,w\},$$ for $i=1,2,3$.
Again $[B,B]=\mathbb Re_1\subset I$, so
Lemma \ref{bolideal} shows that $I$ is a proper Bol ideal.
We have proved the following:

\begin{prop}\label{propositionsymmetric-rank-one}
No Bol algebra with \(A=A_{\mathrm s}\) is simple.
\end{prop}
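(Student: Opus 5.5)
The plan is to show that for $A=A_{\mathrm s}=E_{11}$ one can always exhibit a nonzero proper subspace $I$ satisfying the two conditions of Lemma~\ref{bolideal}. The binary part is trivial to control. Here $[x,y]=(x\times y)_1e_1$, so $[B,B]=\mathbb Re_1$. Hence every subspace containing $e_1$ satisfies $[I,B]\subseteq I$. Also $[e_1,B]=0$, so $\mathbb Re_1$ itself is closed under the binary product. The whole argument therefore reduces to controlling the images $P_i(B)$, and in fact only their components $\pi P_i$ along $W=\operatorname{Span}\{e_2,e_3\}$.

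The first step is to solve the affine linear system (B1$_{\mathrm{op}}$), (C$_{\mathrm{op}}$) for $A=E_{11}$. I would use the same block decomposition as in Case (II.1), with respect to $\mathbb Re_1\oplus W$, or alternatively write $P=P_0+Q$ for a particular solution and use the homogeneous structure of Lemma~\ref{nlemma} adapted to the singular $A$. I keep only the $W$-components, which take the nine-parameter form \eqref{eq3.3.1.1}. The second step is to extract from (B2$_{ij}$) the quadratic relations \eqref{eq3.3.1.2}--\eqref{eq3.3.1.5}. Subtracting the two relations in \eqref{eq3.3.1.3} gives $a^2=d^2$. The case $a=d\neq0$ is excluded via $c(a+d)=0$ and $cf=d(2a+d)$, so $a=-d$.

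Then I split according to $c$.
\begin{itemize}
\item If $c=0$, then $d=a=0$, and \eqref{eq3.3.1.4} gives $b=f=0$.
\begin{itemize}
\item If moreover $e=0$, then $\pi P_2=\pi P_3=0$ and $\pi P_ie_1=0$. Since $e_1\times B=W$, the line $\mathbb Re_1$ is a Bol ideal.
\item If $e\neq0$, further (B2) entries force $y=z=0$. Then every $P_i(B)$ lies in $\operatorname{Span}\{e_1,e_2\}$, which is a Bol ideal because it contains $[B,B]$ and $P(B)B$.
\end{itemize}
\item If $c\neq0$, I solve \eqref{eq3.3.1.2}, \eqref{eq3.3.1.4}, \eqref{eq3.3.1.5} to get $b=f=-d^2/c$ and $e=d^3/c^2$. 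Further entries of (B2$_{ij}$) give $dy=cz$ and $cx+dz=0$. Then every column of each $\pi P_i$ is proportional to $w=-de_2+ce_3$, so $I=\operatorname{Span}\{e_1,w\}$ contains $[B,B]$ and all $P_i(B)$, and is again a proper Bol ideal.
\end{itemize}

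The main obstacle is the middle step. One must select from the many entries of (B2$_{ij}$) exactly the polynomial relations that force the rank-one structure of the $\pi P_i$, and verify the common-direction claim in the case $c\neq0$. I would first look at the $W$-entries of $[P_3,P_j]$, since $P_3$ is the most constrained operator, and check the proportionality to $w$ column by column using the two linear relations $dy=cz$ and $cx+dz=0$.
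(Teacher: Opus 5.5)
Your proposal is correct and follows the paper's proof essentially step by step. It uses the same $W$-projected normal form \eqref{eq3.3.1.1}, the same reduction $a^2=d^2\Rightarrow a=-d$, the same case split on $c$ and $e$, and the same three proper Bol ideals $\mathbb Re_1$, $\operatorname{Span}\{e_1,e_2\}$ and $\operatorname{Span}\{e_1,w\}$ with $w=-de_2+ce_3$.

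One minor caveat concerns the first step. Neither tool you mention transfers verbatim to $A=E_{11}$. The Case (II.1) reduction rests on the substitution $R(u)=P(A^{-1}u)$, and Lemma~\ref{nlemma} cancels $A$ after symmetrizing; both require an invertible $A$. So here you should expand (B1$_{\mathrm{op}}$) and (C$_{\mathrm{op}}$) directly, as the paper does.
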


\subsubsection{The nonsymmetric representative}

Assume now \(A=A_{\mathrm n}\). Solving \rm{(B1$_{\mathrm{op}}$)} together with
cyclicity gives
$$
P_1=
\begin{pmatrix}
-\rho/2&0&\alpha\\
0&\beta&\gamma\\
0&0&0
\end{pmatrix},
\qquad
P_2=
\begin{pmatrix}
-\eta&-\rho/2&\delta\\
\varepsilon&-\kappa&\varphi\\
\zeta&0&\eta
\end{pmatrix},
\qquad
P_3=
\begin{pmatrix}
0&0&\rho\\
0&\lambda&\kappa\\
0&0&0
\end{pmatrix}.
$$
The second Bol identity now gives, among its entries,
\[
\lambda^2=0,
\qquad
\alpha\lambda-\beta^2=0,
\qquad
-2\alpha\lambda+3\rho^2=0.
\]
Hence
\begin{equation} \label{eq3.3.2.2}
\lambda=\beta=\rho=0.
\end{equation}
The remaining equations imply
\begin{equation} \label{eq3.3.2.3}
\alpha\varepsilon=
\alpha\zeta=
\alpha\eta=
\alpha\kappa=0,
\end{equation}
\begin{equation} \label{eq3.3.2.4}
\gamma\zeta=0,
\qquad
\gamma(\eta+2\kappa)=0,
\qquad
\kappa\delta-2\gamma\eta=0,
\end{equation}
\begin{equation} \label{eq3.3.2.5}
\gamma\varepsilon+\varphi\kappa=0,
\qquad
\zeta\kappa=0,
\qquad
\kappa(\eta+2\kappa)=0.
\end{equation}
These consequences are already sufficient to exclude simplicity.

If \(\alpha\neq0\), then \eqref{eq3.3.2.3} gives $
\varepsilon=\zeta=\eta=\kappa=0$. 
Together with \eqref{eq3.3.2.2}, this shows that 
$P_i(B)\subseteq I:=\operatorname{Span}\{e_1,e_2\}$, for $i=1,2,3$. 
Since \([B,B]=\mathbb Re_1\subset I\), the plane \(I\) is a proper Bol
ideal.

Assume next \(\alpha=0\) and \(\kappa\neq0\). 
In this case both \(P_1(B)\) and \(P_3(B)\) are contained in
\(\mathbb Re_2\). Moreover \([e_2,B]=0\) and
$
e_2\times B=\operatorname{Span}\{e_1,e_3\}$. 
Lemma~\ref{bolideal} therefore shows that
\(\mathbb Re_2\) is a proper Bol ideal.

Assume now \(\alpha=\kappa=0\) and \(\gamma\neq0\). Equations
\eqref{eq3.3.2.4}--\eqref{eq3.3.2.5} give
$
\eta=\zeta=\varepsilon=0$. 
Hence again
$ 
P_i(B)\subseteq\operatorname{Span}\{e_1,e_2\}$,
for $i=1,2,3$, 
so \(\operatorname{Span}\{e_1,e_2\}\) is a proper Bol ideal.

Finally, suppose \(\alpha=\kappa=\gamma=0\). Then
\(P_1=P_3=0\). 
Since
\([e_2,B]=0\) and
$
e_2\times B=\operatorname{Span}\{e_1,e_3\}$, 
Lemma~\ref{bolideal} shows once more that
\(\mathbb Re_2\) is a proper Bol ideal.

Combining the nonsymmetric case with
Proposition~\ref{propositionsymmetric-rank-one}, we obtain the rank-one
classification.

\begin{theor}\label{thm:rank-one}
No simple three-dimensional real Bol algebra has binary product of rank
one. Equivalently, if \(\operatorname{rank}A=1\), then \(B\) contains a
nonzero proper Bol ideal.
\end{theor}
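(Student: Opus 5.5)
The theorem collects what the rank-one subsection has just established, so the proof will be an assembly of the preceding results. I will start from the fact that $[B,B]=\operatorname{im}A$. Hence $\operatorname{rank}A=1$ is the same as $\dim[B,B]=1$. By Lemma~\ref{lem:twisted-congruence}, whose argument does not use invertibility, together with the transformation law \eqref{eq:action}, a change of basis lets me assume that $A$ is one of the two normal forms of Lemma~\ref{lem:rank-one-normal-forms}, namely $A_{\mathrm s}$ or $A_{\mathrm n}$. Simplicity is invariant under isomorphism, so it suffices to exhibit a nonzero proper Bol ideal in each of these two cases.

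For $A=A_{\mathrm s}$ this is exactly Proposition~\ref{propositionsymmetric-rank-one}. I would recall its case structure briefly. The relations \eqref{eq3.3.1.2}--\eqref{eq3.3.1.5} drawn from (B2$_{ij}$) force $a=-d$. Then there are three situations, and in each Lemma~\ref{bolideal} applies with $[B,B]=\mathbb Re_1$ contained in the candidate subspace:
\begin{itemize}
\item $c=0$ and $e=0$: the ideal is $\mathbb Re_1$;
\item $c=0$ and $e\neq0$: the ideal is $\operatorname{Span}\{e_1,e_2\}$;
\item $c\neq0$: the ideal is $\operatorname{Span}\{e_1,-de_2+ce_3\}$.
\end{itemize}

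For $A=A_{\mathrm n}$, the analysis immediately preceding the theorem first gives $\lambda=\beta=\rho=0$. The consequences \eqref{eq3.3.2.3}--\eqref{eq3.3.2.5} of (B2$_{ij}$) then split into four exhaustive cases:
\begin{itemize}
\item $\alpha\neq0$: the plane $\operatorname{Span}\{e_1,e_2\}$ is an ideal;
\item $\alpha=0$, $\kappa\neq0$: the line $\mathbb Re_2$ is an ideal;
\item $\alpha=\kappa=0$, $\gamma\neq0$: the plane $\operatorname{Span}\{e_1,e_2\}$ is an ideal;
\item $\alpha=\kappa=\gamma=0$: the line $\mathbb Re_2$ is an ideal.
\end{itemize}
In each case the ideal property follows from Lemma~\ref{bolideal}, using $[e_2,B]=0$ and $e_2\times B=\operatorname{Span}\{e_1,e_3\}$ for the line $\mathbb Re_2$.

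Combining the two representatives proves the theorem. The only step needing care is checking that the case splits are exhaustive and that the needed (B2$_{ij}$) entries were derived correctly. In the $A_{\mathrm s}$ case with $c\neq0$ I would verify $\pi P_i(B)\subseteq\mathbb R(-de_2+ce_3)$ column by column from \eqref{eq3.3.1.1}, using \eqref{eq3.3.1.9} and \eqref{eq3.3.1.10}. I expect this to be the main obstacle. If needed I would recheck it symbolically.
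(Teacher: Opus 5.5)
Your proposal is correct and is essentially the paper's own argument. You reduce to $A_{\mathrm s}$ or $A_{\mathrm n}$ via Lemma~\ref{lem:rank-one-normal-forms}, then use the same case splits and the same ideals, each certified by Lemma~\ref{bolideal}. As a small simplification, in the $A_{\mathrm n}$ case the condition $\alpha=0$ alone (together with $\lambda=\beta=\rho=0$) already gives $P_1(B),P_3(B)\subseteq\mathbb Re_2$, so your last three subcases can all be handled by the line $\mathbb Re_2$.
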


\subsection{Simple three-dimensional Lie triple systems} \label{sec:3.4}

It remains to consider the case
$[B,B]=0$.
Then the binary product vanishes identically and the Bol identities reduce
precisely to the identities of a Lie triple system. We use the classical
structure theory of Lister~\cite{Lister}.

Recall that every Lie triple system $B$ admits a one-to-one universal
embedding in a Lie algebra $\mathfrak g_U(B)$, and
\[
\mathfrak g_U(B)
=
B\oplus [B,B]_{\mathfrak g_U(B)}.
\]
Here $[B,B]_{\mathfrak g_U(B)}$ denotes the Lie bracket in the universal
Lie algebra and should not be confused with the binary Bol product, which
is zero in the present case.

By Lister~\cite[Theorem~2.13]{Lister}, if $B$ is a simple Lie triple
system, then exactly one of the following alternatives occurs:

\begin{enumerate}
\item $B$ is the Lie triple system of a simple Lie algebra $\mathfrak h$,
with
$\langle x,y,z\rangle=[[x,y],z]$;
\item $B$ is not the Lie triple system of a Lie algebra and its universal
Lie algebra $\mathfrak g_U(B)$ is simple.
\end{enumerate}
We specialize this dichotomy to $\dim B=3$.

Suppose first that alternative 1. occurs. Then $\mathfrak h$ is a
three-dimensional simple real Lie algebra, and hence
$\mathfrak h\simeq\mathfrak{so}(3)$ or
$\mathfrak h\simeq\mathfrak{sl}_2(\mathbb R)$.

For $\mathfrak{so}(3)$, choose a basis satisfying
\[
[e_1,e_2]=e_3,\qquad
[e_2,e_3]=e_1,\qquad
[e_3,e_1]=e_2.
\]
Then, for $i\ne j$, one has
$[[e_i,e_j],e_j]=-e_i$,
whereas the triple brackets involving three distinct basis vectors vanish.
Hence
\begin{equation}\label{eq:LTSmetric}
\langle x,y,z\rangle
=
h(y,z)x-h(x,z)y
\end{equation}
with
$h=-I_3$.

For $\mathfrak{sl}_2(\mathbb R)$, we may choose a basis satisfying
\[
[e_1,e_2]=-e_3,\qquad
[e_2,e_3]=e_1,\qquad
[e_3,e_1]=e_2.
\]
Indeed,
\[
[[e_1,e_2],e_2]=e_1,\qquad
[[e_2,e_3],e_3]=-e_2,\qquad
[[e_3,e_1],e_1]=e_3,
\]
and again the triple brackets involving three distinct basis vectors vanish.
Thus \eqref{eq:LTSmetric} holds with
$h=\operatorname{diag}(1,1,-1)$.

Consider now alternative 2., and put
\[
\mathfrak g=\mathfrak g_U(B),
\qquad
\mathfrak h=[B,B]_{\mathfrak g}.
\]
Then
$\mathfrak g=B\oplus\mathfrak h$.
Since the bracket induces a surjective linear map
\[
\Lambda^2B\longrightarrow\mathfrak h,
\qquad
x\wedge y\longmapsto[x,y]_{\mathfrak g},
\]
we have
\[
\dim\mathfrak h\leq\dim\Lambda^2B=3,
\]
and consequently
$3\leq\dim\mathfrak g\leq6$.

By the classification of low-dimensional real Lie algebras
\cite{Turkowski}, the only simple real Lie algebras of dimension
less than eight are
\[
\mathfrak{so}(3),\qquad
\mathfrak{sl}_2(\mathbb R),\qquad
\mathfrak{so}(3,1)
\simeq
\mathfrak{sl}_2(\mathbb C)_{\mathbb R}.
\]
The first two have dimension three. If $\dim\mathfrak g=3$, then
$\mathfrak h=0$, and consequently
\[
\langle x,y,z\rangle
=
[[x,y]_{\mathfrak g},z]_{\mathfrak g}
=0,
\]
contrary to the simplicity of the nonzero Lie triple system $B$.
Therefore
\begin{equation}\label{eq:dimg}
\dim\mathfrak g=6,
\qquad
\dim\mathfrak h=3,
\end{equation}
and
$\mathfrak g
\simeq
\mathfrak{sl}_2(\mathbb C)_{\mathbb R}$.
By Lister~\cite[Theorem~1.1]{Lister}, the decomposition
$\mathfrak g=\mathfrak h\oplus B$
is the eigenspace decomposition of a unique involutive automorphism
$\sigma$ of $\mathfrak g$:
\[
\mathfrak h=\mathfrak g^\sigma,
\qquad
B=\mathfrak g^{-\sigma}.
\]

Let $J$ denote multiplication by $i$ on
$\mathfrak g=\mathfrak{sl}_2(\mathbb C)_{\mathbb R}$, and
consider the complexification
\[
\mathfrak g_{\mathbb C}
=
\mathfrak g\otimes_{\mathbb R}\mathbb C
\simeq
\mathfrak{sl}_2(\mathbb C)\oplus
\mathfrak{sl}_2(\mathbb C).
\]
The two simple ideals in this decomposition are precisely the
$+i$- and $-i$-eigenspaces of the complexification $J_{\mathbb C}$.

The automorphism $\sigma$ extends complex-linearly to an automorphism
$\sigma_{\mathbb C}$ of $\mathfrak g_{\mathbb C}$. Since every automorphism
of a semisimple Lie algebra permutes its simple ideals,
$\sigma_{\mathbb C}$ either preserves the two eigenspaces of
$J_{\mathbb C}$ or interchanges them. In the first case
$\sigma J=J\sigma$,
whereas in the second
$\sigma J=-J\sigma$.
Thus
$\sigma J\sigma^{-1}=\pm J$.

The positive sign cannot occur. Indeed, if $\sigma J=J\sigma$, then
$\mathfrak h$ is $J$-invariant, since for every $x\in\mathfrak h$,
\[
\sigma(Jx)=J\sigma(x)=Jx.
\]
Thus $\mathfrak h$ would be a complex vector space and would therefore
have even real dimension, contrary to \eqref{eq:dimg}. Hence
\begin{equation}\label{eq:antiJ}
\sigma J=-J\sigma.
\end{equation}

For $x\in\mathfrak h$, equation \eqref{eq:antiJ} gives
\[
\sigma(Jx)=-J\sigma(x)=-Jx,
\]
and therefore
$J\mathfrak h\subseteq B$.
Since both spaces have real dimension three,
$B=J\mathfrak h$.

Consequently
\[
\mathfrak g=\mathfrak h\oplus J\mathfrak h
\]
as real vector spaces. Regard
\(\mathfrak g=\mathfrak{sl}_2(\mathbb C)_{\mathbb R}\)
as the complex Lie algebra \(\mathfrak{sl}_2(\mathbb C)\), with
multiplication by \(i\) given by \(J\). Consider
\[
\Phi:\mathfrak h\otimes_{\mathbb R}\mathbb C
   \longrightarrow \mathfrak{sl}_2(\mathbb C),
\qquad
\Phi\bigl(x\otimes(a+ib)\bigr)=ax+bJx.
\]
The decomposition
\(\mathfrak g=\mathfrak h\oplus J\mathfrak h\)
shows that \(\Phi\) is an isomorphism of complex vector spaces.
Moreover, since \(\mathfrak h=\mathfrak g^\sigma\) is a real Lie
subalgebra and the bracket of \(\mathfrak{sl}_2(\mathbb C)\) is
complex bilinear,
\[
[Jx,y]=J[x,y],\qquad
[x,Jy]=J[x,y],\qquad
[Jx,Jy]=-[x,y]
\]
for all \(x,y\in\mathfrak h\).
It follows that \(\Phi\) preserves Lie brackets. Hence
\[
\mathfrak h\otimes_{\mathbb R}\mathbb C
   \simeq \mathfrak{sl}_2(\mathbb C)
\]
as complex Lie algebras, and therefore \(\mathfrak h\) is a real
form of \(\mathfrak{sl}_2(\mathbb C)\). In particular, $\mathfrak h$ is a
three-dimensional real simple Lie algebra: indeed, the complexification
of any nonzero ideal of $\mathfrak h$ would be a nonzero ideal of
$\mathfrak{sl}_2(\mathbb C)$. Hence, by the same low-dimensional
classification used above,
\[
\mathfrak h\simeq\mathfrak{so}(3)
\qquad\text{or}\qquad
\mathfrak h\simeq\mathfrak{sl}_2(\mathbb R).
\]

Identifying $B=J\mathfrak h$ with $\mathfrak h$ by
$Jx\longmapsto x$,
and using the complex bilinearity of the Lie bracket, we obtain
\[
[Jx,Jy]=-[x,y],
\]
and hence
\[
[[Jx,Jy],Jz]
=
-J[[x,y],z].
\]
Thus alternative 2. gives precisely the negatives of the two
Lie triple systems arising in alternative 1., and, in terms of
\eqref{eq:LTSmetric}, these correspond respectively to
$h=I_3$, and
$h=\operatorname{diag}(-1,-1,1)$.

We have therefore proved the classification part of the following theorem.

\begin{theor}
\label{thm:simpleLTS}
Let $B$ be a three-dimensional simple real Lie triple system. Then there
exists a nondegenerate symmetric bilinear form $h$ on $B$ such that
\begin{equation}\label{eq:Tg}
\langle x,y,z\rangle
=
h(y,z)x-h(x,z)y.
\end{equation}
Conversely, every product of the form \eqref{eq:Tg}, with $h$
nondegenerate, defines a simple Lie triple system.

Moreover, two such Lie triple systems $B_{h_1}$ and $B_{h_2}$ are isomorphic
if and only if the corresponding symmetric bilinear forms $h_1$ and $h_2$
are congruent. Consequently, up to isomorphism, there are exactly four
simple three-dimensional real Lie triple systems, represented by
\[
h=\pm I_3,\qquad 
h=\pm \operatorname{diag}(1,1,-1).
\]
\end{theor}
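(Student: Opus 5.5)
The classification direction is already established by the discussion preceding the statement. Alternative 1 of Lister's dichotomy yields $h=-I_3$ and $h=\operatorname{diag}(1,1,-1)$. Alternative 2 yields their negatives, $h=I_3$ and $h=\operatorname{diag}(-1,-1,1)$. In every case $h$ is nondegenerate and the product has the form \eqref{eq:Tg}. Three claims remain: the converse (simplicity of every $B_h$ with $h$ nondegenerate), the isomorphism criterion, and the count of four classes.

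\emph{Simplicity.} I would first check that \eqref{eq:Tg} is a Lie triple system for every symmetric $h$. The product is skew in $x,y$. Its cyclic sum vanishes termwise because $h$ is symmetric. The derivation identity holds because $D(x,y)=x\,h(y,\cdot)-y\,h(x,\cdot)$ is $h$-skew, hence lies in $\mathfrak{so}(h)$ and acts as a derivation of the ternary product; in the nondegenerate case this is just the curvature tensor of a space form. Now let $I\neq0$ be an ideal and pick $0\neq x\in I$. By nondegeneracy there is $z$ with $h(x,z)\neq0$. For any $y$ we have $\langle x,y,z\rangle=h(y,z)x-h(x,z)y\in I$, and since $x\in I$ this forces $y\in I$. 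Hence $I=B$. The product is also nonzero, since $\langle x,y,y\rangle\ne0$ whenever $x\perp_h y$ and $h(y,y)\neq0$, so $B_h$ is simple.

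\emph{Isomorphism criterion.} If $g$ is congruent data, i.e.\ $h_2(gu,gv)=h_1(u,v)$, then $g$ intertwines the two products directly. Conversely, suppose $g:B_{h_1}\to B_{h_2}$ is an isomorphism, and transport $h_2$ to $h_2'=h_2(g\cdot,g\cdot)$ on $B$; then $B_{h_1}=B_{h_2'}$ as triple systems. The key step is to recover $h$ intrinsically from the product. I would use the trace form $\operatorname{tr}(z\mapsto\langle z,x,y\rangle)$. Compute $\langle z,x,y\rangle=h(x,y)z-h(z,y)x$; its trace is $3h(x,y)-h(x,y)=2h(x,y)$. The trace form is therefore $2h$ and is an invariant of the triple system. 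It follows that $h_1=h_2'$, i.e.\ $h_1$ and $h_2$ are congruent.

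\emph{Count.} By Sylvester's law, a nondegenerate symmetric form on $\mathbb R^3$ has signature $(3,0)$, $(2,1)$, $(1,2)$ or $(0,3)$, with representatives $I_3$, $\operatorname{diag}(1,1,-1)$, $-\operatorname{diag}(1,1,-1)$, $-I_3$. This gives exactly four pairwise non-isomorphic systems.

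I expect the main obstacle to be the converse half of the isomorphism criterion, i.e.\ showing that $h$, and not just its class up to scaling, is an invariant of the product. Scaling $h$ by $c>0$ is absorbed by rescaling basis vectors, but scaling by $c<0$ must not be absorbed. The trace identity settles this, so the step reduces to a routine computation that I would double-check carefully.
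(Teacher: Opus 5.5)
Your proposal is correct, and it departs from the paper mainly in the isomorphism criterion.

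\textbf{Isomorphism criterion.} The paper writes $F\langle x,y,z\rangle_{h_1}=\langle Fx,Fy,Fz\rangle_{h_2}$ as $h_1(y,z)Fx-h_1(x,z)Fy=h_2(Fy,Fz)Fx-h_2(Fx,Fz)Fy$. It compares coefficients of the independent vectors $Fx,Fy$, and then, for arbitrary $u$, chooses $x\notin\mathbb R u$ to get $h_2(Fu,Fz)=h_1(u,z)$.

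You instead recover $h$ from the product itself via the contraction $\operatorname{tr}(z\mapsto\langle z,x,y\rangle)=2h(x,y)$, a Ricci-type form. Any isomorphism preserves this form, because it conjugates the operators $z\mapsto\langle z,x,y\rangle$; so your transport to $h_2'$ is not even needed.

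Your route is shorter. It also explains at once why $h$ and $ch$ with $c<0$ cannot be identified, and it works verbatim in any dimension $n\geq2$ with factor $n-1$. The paper's coefficient comparison avoids traces and gives the isometry property of $F$ directly.

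\textbf{Other steps.} The remaining differences are minor variants.
\begin{itemize}
\item In the simplicity proof you absorb the term $h(y,z)x$ using $x\in I$, so a single $z$ with $h(x,z)\neq0$ suffices. The paper instead chooses $z$ with $h(x,z)=1$ and $h(y,z)=0$, using the independence of $h(x,\cdot)$ and $h(y,\cdot)$.
\item You verify the triple-system identities through the $h$-skewness of $D(x,y)$, where the paper just cites direct substitution.
\item You explicitly check that the product is nonzero, which the paper leaves implicit.
\end{itemize}
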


\begin{proof}
Only the converse and the assertion concerning isomorphisms remain to
be proved.

Let $h$ be nondegenerate and define the ternary product by
\eqref{eq:Tg}. A direct substitution shows that it satisfies the Lie
triple system identities.

We prove simplicity. Let $I\neq0$ be an ideal and choose
$0\neq x\in I$. For any $y\notin\mathbb Rx$, the linear functionals
$h(x,\cdot)$ and $h(y,\cdot)$
are linearly independent. Indeed, their linear dependence would, by
the nondegeneracy of $h$, imply the linear dependence of $x$ and $y$.
Hence there exists $z\in B$ such that
\[
h(x,z)=1,\qquad
h(y,z)=0.
\]
It follows that
\[
\langle x,y,z\rangle=-y\in I.
\]
Thus every $y\notin\mathbb Rx$ belongs to $I$, and therefore $I=B$.
Hence $B_h$ is simple.

Finally, suppose that
$F:B_{h_1}\longrightarrow B_{h_2}$
is an isomorphism. Then
\[
F\bigl(\langle x,y,z\rangle_{h_1}\bigr)
=
\langle Fx,Fy,Fz\rangle_{h_2},
\]
so
\[
h_1(y,z)Fx-h_1(x,z)Fy
=
h_2(Fy,Fz)Fx-h_2(Fx,Fz)Fy.
\]
Whenever $x$ and $y$ are linearly independent, so are $Fx$ and $Fy$,
and comparison of coefficients gives
\[
h_2(Fy,Fz)=h_1(y,z),
\qquad
h_2(Fx,Fz)=h_1(x,z).
\]
Let $0\neq u\in B$ be arbitrary. Choose $x\notin\mathbb Ru$ and take
$y=u$ in the preceding equality. Since $z$ is arbitrary, we obtain
\[
h_2(Fu,Fz)=h_1(u,z),
\]
for $z\in B$, hence
\[
h_2(Fu,Fv)=h_1(u,v),
\]
for $u,v\in B$,
the case $u=0$ being trivial. Thus $F$ is an isometry from $h_1$ to $h_2$.

Conversely, every isometry from $h_1$ to $h_2$ is plainly an isomorphism
from $B_{h_1}$ to $B_{h_2}$. Therefore
\[
B_{h_1}\simeq B_{h_2} 
\quad\Longleftrightarrow\quad
h_1\text{ and }h_2\text{ are congruent}.
\]
By Sylvester's law of inertia, the four matrices displayed in the
statement represent exactly the four congruence classes of
nondegenerate symmetric bilinear forms on a three-dimensional real
vector space.
\end{proof}

\bigskip
Giovanni Falcone, Department of Mathematics and Computer Science, University of Palermo, Italy, Email: giovanni.falcone@unipa.it, 
ORCID: 0000-0002-5210-5416 

\bigskip
\'Agota Figula$^{\ast}$, Institute of Mathematics, University of Debrecen, P.O.B. 400, H-4002, Deb\-recen, Hungary 
Email: figula@science.unideb.hu, 
ORCID: 0000-0002-8095-6074

\bigskip
Emese K\'asa, Doctoral School of Mathematical and Computational Sciences, University of Deb\-recen,  P.O.B. 400, H-4002, Debrecen, Hungary, 
Email:  kasa.emese@science.unideb.hu, 
ORCID: 0009-0002-0804-545X \\
and \\
Institute of Mathematics and Computer Sciences, University of Ny\'iregyh\'aza, P.O.B. 166, H-4400, Ny\'iregyh\'aza, Hungary 

\bigskip
Gianmarco Mattana, Department of Mathematics and Computer Science, University of
Palermo, Italy, Email: gianmarco.mattana@unipa.it 

\bigskip 
P\'eter T. Nagy, John von Neumann Faculty of Informatics, \'Obuda University,  H-1034, Budapest, B\'ecsi \'ut 96/b, Hungary, 
Email: nagy.peter@nik.uni-obuda.hu,
ORCID: 0000-0001-6838-504

\end{document}